\def\C{{\mathbb C}}
\def\Z{{\mathbb Z}}
\def\R{{\mathbb R}}
\newtheorem{theorem}{Theorem} 
\newtheorem{proposition}{Proposition}
\newtheorem{corollary}{Corollary}
\title{Irreducibility of the Picard-Fuchs equation related to the Lotka-Volterra polynomial $ x^2 y^2(1-x-y) $ }
 \author{Lubomir Gavrilov \\
 \normalsize \it Institut de Math\'{e}matiques de Toulouse, UMR 5219\\
 \normalsize \it Universit\'{e} de Toulouse, CNRS\\
   \normalsize \it UPS IMT, F-31062 Toulouse Cedex 9, \normalsize \it   France}
\begin{document}
\maketitle
\begin{abstract}
We prove that the Zarisky closure of the monodromy group of the polynomial $ x^2 y^2(1-x-y) $ is the symplectic group $Sp(4,\C)$. This shows that some previous results about this monodromy representation are wrong.
\end{abstract}
\tableofcontents

\section{Introduction.}

The study of a germ of a vector fields in $\R^4$ with two pairs of non-resonant imaginary eigenvalues reduces to the study of the special perturbations
\begin{equation}
\label{lvp}
xy(1-x-y) d \log H + \varepsilon_1 y dx +\varepsilon_2 yx^2 dx = 0 
\end{equation}
of the integrable quadratic  foliation
\begin{equation}
\label{lv}
xy(1-x-y) d \log H =  0
\end{equation}
with first integral $ H= x^py^q(1-x-y)$, e.g. \cite[chapter 1, section 4.6]{aais94}. Equivalently, we may consider 
 the following generalized Lotka-Volterra system associated to   (\ref{lv}) 
\begin{equation}
\label{glv}
\bigg \{
\begin{aligned}
x' =& x[q(1-x-y)-y]\\
y' = &y[x-p(1-x-y)]
\end{aligned}
\end{equation}
and the perturbed foliation (\ref{lvp}) is associated then to
\begin{equation}
\label{glvp}
\bigg \{
\begin{aligned}
x' =& x[q(1-x-y)-y]\\
y' = &y[x-p(1-x-y) + \varepsilon_1  +\varepsilon_2 x^2]  .
\end{aligned}
\end{equation}
The limit cycles of the perturbed system correspond  to the zeros of the displacement map 
$$
\Delta(h) = \frac1h \int_{H=h}x^{p-1}y^{q}( \varepsilon_1 +\varepsilon_2 x^2) dx + O(\varepsilon_1^2+\varepsilon_2^2) .
$$
where $\Gamma_h= \{H=h\}$ is a continuous family of ovals (closed orbits) of the non-perturbed system (\ref{glv}). The unicity of the limit cycle of (\ref{lvp}),(\ref{glvp}) was shown first  by \cite[Zoladek,1986]{zola87}, who proved the monotonicity of the function
$$
F(h) = \int_{H=h}  x^{p-1}y^q dx/  \int_{H=h} x^{p+1}  y^q  dx 
$$
on the maximal interval $(0,h_1)$ where the ovals of $\{ (x,y) : H(x,y)= h\}$ exist.
Note that the system (\ref{glvp}) has $\{x=0\}$ and $\{y=0\}$ as invariant lines. In this relation, recall that a
plane  quadratic vector field with an invariant line has a unique limit cycle (if any), as explained by
\cite[Coppel, 1989]{copp89}, but see also \cite[Zegeling and Kooij, 1994]{zeko94}. 

The case of a more general quadratic perturbations of (\ref{glv}) was studied also by \cite[Zoladek, 1994]{zola94} and revised recetly in \cite{zola15}.
The commont point of the above mentioned papers is, that they use \emph{ad hoc} methods based on apriori estimates. The essential reason why all these estimates hold remains hidden. 

In 1985  Van Gils and Horozov \cite{giho85} gave an overview of the varous methods, which have been used at this time to prove the uniqueness of the limit cycles for the perturbations of the generalized Lotka-Volterra system (\ref{glv}).  The central result of their paper is that in the particular case, in which $p=q$ is an integer, the functions
\begin{equation}
\label{abelian}
I(h)= \alpha \int_{H=h}  x^{p+1}y^p \, dx      + \beta   \int_{H=h}  x^{p-1}y^p \, dx, \; \alpha, \beta \in \R, h \in (0, h_1)
\end{equation}
satisfy a Picard-Fuchs equation of second order, whose coefficients are rational in $h^{1/p}$. The authors used then topological arguments as the Rolle's theorem, to bound the zeros of 
$I(h)$ in terms of the degrees of the coefficients of the Picard-Fuchs equation, from which the result of Zoladek \cite{zola87} follows.

More precisely, if $h_1>0$ and $h_2=0$ are the critical values of the Lotka-Volterra integral $ H= x^py^p(1-x-y)$, then
 the Abelian integral $I(h)$ (\ref{abelian}) allows an analytic continuation from $(0,  h_1)$ to a small neighbourhood of $h_1$. This follows from the Picard-Lefschetz formula and the fact, that the oval $\{ H= h \}$ represents a  cycle vanishing at $h_1$. At the other end of the interval, at $h=0$, the function is not analytic, but has a logarithmic type of singularity, as it follows from a generalised Picard-Lefschetz formula \cite{giho85}. Namely
$$
I(h)= J(z) \ln (z) + K(z), \; z= h^{1/p}
$$
for suitable functions $J(z), K(z)$, which are analytic in a neighbourhood of $z=0$. Note that $I(h) + 2\pi \sqrt{-1}  J(z)$ is an analytic continuation of $I(h)$ and hence it is an Abelian integral too. It has therefore a similar logarithmic type singularity at $h=h_1$.
Following  \cite{giho85}, denote by $W$ the Wronskian 
$$
W = 
\det \begin{pmatrix}
 \frac{d}{dz} I(z) &  \frac{d}{dz} J(z)\\
\frac{d^2}{dz^2} I(z) &  \frac{d^2}{dz^2} J(z)
\end{pmatrix}
$$
The Picard-Lefschetz formula then implies, that $W=W(h)$ as a function in $h$ allows an analytic continuation from the interval $(0,h_1)$ to a neighborhood of $ h_1$, and also that $W(h) $ allows an analytic continuation from the interval $(0,h_1)$ to a covering of a punctured neighbourhood of $h= 0$, in which  $W$ is analytic in $z=h^{1/p}$.
From this, the authors concluded that $W$ is in fact a rational function in $z=h^{1/p}$, and even computed it explicitly  \cite[Lemma 1]{giho85}. 

%

This conclusion, that $W$ is a rational function in $h^{1/p}$ is, however, wrong.
Indeed, there exist functions of moderate growth, analytic on the universal covering of $ \C~\setminus\{0, h_1\}$, with a branch on $(0,h_1)$ which are analytic at $h_1$, analytic in $h^{1/p}$ at $h=0$, but still not algebraic in $h$. To construct an example, consider a second order Fuchs  equation with singular points at $0, h_1,\infty$ and Riemann scheme
$$
\begin{pmatrix}
0& h_1& \infty\\
0& 0& \alpha \\
\frac1p& 0& \beta
\end{pmatrix}
$$
where $\alpha+\beta + 1/p= 1$. Let $I(h)$ be a (branch of a) solution on $(0,h_1)$, analyic in a small neighbourhood of $h=h_1$. For generic values of the  characteristic exponent $\alpha$ (or $\beta$) our equation  has no algebraic solutions. Therefore $I(h)$ can not be analytic in a neighbourhood of $h=0$ too. It is concluded that $I(h)$ is analytic in $h^{1/p}$ in a neighbourhood of $h=0$. Clearly, this argument points out a gap, but does not disproof the result of \cite{giho85}.

The purpose of the present note is to study 
 in more detail the monodromy representation of  the  Lotka-Volterra polynomial $H= x^py^p(1-x-y)$, in the case when  $p$ is an integer. The knowledge of this monodromy representation allows, according to 
 \cite{boga11},  to compute the minimal degree of the differential equation satisfied by $I(h)$.   
We shall show in this way, that in the first non trivial case $p=2$, the Abelian integral $I(h)$ satisfies a linear differential equation  of minimal degree four, even if the coefficients are supposed to be algebraic functions. 
\emph{Thus, the result of \cite[Lemma 1]{giho85} is definitely wrong.}  Note also, that the the computation of \cite[section 3.3]{boga11}, and in particular Corollary 4 there, are also wrong.

We prove in fact a more general result  about the attached Lie group $\mathbf G$, which is the Zarisky closure of the monodromy group of the polynomial  $H(x,y)$. Namely, we show that in the case $p=2$ the group $\mathbf G$ is isomorphic  to the symplectic group $Sp(4,\C)$, see Theorem \ref{case2}. As the standard representation of  $Sp(4,\C)$ is irreducible, then according to Corolary \ref{md} we obtain

\emph{The Abelian integral (\ref{abelian}) satisfies a Fuchs type equation of minimal degree four, even if its coefficients are supposed to be algebraic functions in $h$.
}

The paper is organized as follows. In the next short section we give some background, concerning the reduction of the degree of Picard-Fuchs operators. In section \ref{section3} we determine explicitely the monodromy operators, related to the two singular critical values of $H$. This (long) computation is contained in principal in \cite{boga11}, in the case of arbitrary integers $p,q$ and $H=x^py^q(1-x-y)$. In the particular case $p=q$, part of these computations simplify, and for this reason we give here an independent treatment. 

It is a straightforward observation, that the monodromy representation of $H=x^py^p(1-x-y)$ is reducible. We have in fact a two-dimensional plane $V_2$ of zero-cycles on which the monodromy acts as identity, as well a complementary $2p$-dimensional plane $V_{2p}$, invariant under the action of the monodromy group. The nature of this sub-representation $V_{2p}$ is studied in the last section \ref{section4} in the simplest non-trivial case $p=2$.  By taking the Zarisky closure of the monodromy group, we find that the sub-representation $V_4$ coincides with the standard representation of the symplectic group $Sp(4,\C)$.  This, combined with section \ref{section3} implies the claims about the degree of the Picard-Fuchs equation.

\section{Reduction of the degree of Picard-Fuchs equations.} 
\label{section3}
In this section we summarize, following \cite{boga11}, the necessary facts about the reduction of the degree of Picard-Fuchs differential operators.

To a non-constant polynomial $f\in \mathbb C[x,y]$ we associate its monodromy representation
$$
\pi_1(\C \setminus S, b) \to Aut(H_{1}(f_b, \mathbb Z))
$$
where $f_t= f^{-1}(t) \subset \C^2$
are the fibers of the fibration $f: \C^2 \to \C$.
The image of the fundamental group $\pi_1(\C \setminus S, b)$ is the monodromy group $\mathcal M$. The Zarisky closure $\mathbf G = \overline{\mathcal M}$ of $\mathcal M$ is a linear algebraic group, embedded in $GL_d(\mathbb C)$, $d= \dim H_{1}(f_b, \mathbb Z) $. It is nothing but the differential Galois group of a generic Picard-Fuchs system, related to the fibration defined by $f$. In the sequel, an important role is played by the connected component $\mathbf G^0$ of $\mathbf G$, containing the identity transformation.

Let $\delta(t)\subset H_1(f_t, \Z)$ be a continuous family of cycles, and $\omega$ a polynomial one-form. The Abelian integral 
$$I(t)= \int_{\delta(t)} \omega$$
satisfies a Picard-Fuchs equation of minimal degree $d$
$$
I^{(d)} + a_1 I^{(d-1)} + \dots a_d I = 0
$$
whose coefficients are rational functions in $t$, $a_i\in \C(t)$. Let $V\subset H_{1}(f_b, \C)$ be a vector plane, invariant under $\mathbf G$. Then obviously $d\leq \dim V$. Let $\gamma \in V$ and $\gamma(t)$ the corresponding continuous family of cycles (a locally constant section of the homology bundle). The set of such $\gamma$ with the property $\int_{\gamma(t)} \omega \equiv 0$ is an invariant sub-plane of $V$ which we denote by $V_1$. We conclude that $d= \dim V - \dim V_1$.

Consider now the following reduction problem.:

\emph{Find a differential equation
\begin{equation}
\label{md}
I^{(d^0)} + b_1 I^{(d^0-1)} + \dots b_{d^0} I = 0
\end{equation}
of minimal degree $d^0 \leq d$, such that $b_i=b_i(t)$ are algebraic functions in $t$. }

The computation of the degree $d_0$ goes along the same lines as above, except that the Lie group $\mathbf G$ is replaced by $\mathbf G^0$. Namely, let $V^0\subset H_{1}(f_b, \C) $ be a  plane, invariant under $\mathbf G^0$, that is to say a sub-representation of $\mathbf G^0$. Such a plane was called virtually invariant in \cite{boga11}.  It follows that $d_0 \leq \dim V^0$. 
 Let $\gamma \in V^0$ and $\gamma(t)$ the corresponding continuous family of cycles as above. The set of such $\gamma$ with the property $\int_{\gamma(t)} \omega \equiv 0$ is a virtually invariant sub-plane of $V^0$ which we denote by $V_1^0$. We conclude that 
 \begin{theorem}
  $d^0= \dim V^0 - \dim V_1^0$.
 \end{theorem}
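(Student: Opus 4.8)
The plan is to reduce the algebraic-coefficient problem to the rational-coefficient one already treated above, by replacing the base field $\C(t)$ with its algebraic closure, and to show that this replacement has exactly the effect of replacing the differential Galois group $\mathbf G$ by its identity component $\mathbf G^0$. Concretely, an operator with algebraic coefficients annihilating $I(t)$ has all its (finitely many) coefficients in some finite extension of $\C(t)$, i.e.\ rational on a finite branched covering $\phi\colon \tilde C \to \P^1$. Thus $d^0$ is the minimal order of a Picard--Fuchs operator, with coefficients rational in $t$, for the pulled-back fibration $f\circ\phi$, minimized over all such coverings $\phi$. First I would fix the covering: on $\tilde C$ the monodromy representation restricts to the image $\mathcal M'=\phi_*\pi_1(\tilde C\setminus\tilde S)$, a finite-index subgroup of $\mathcal M$; choosing $\phi$ so that $\mathcal M'=\mathcal M\cap\mathbf G^0$ (the kernel of the finite quotient $\mathcal M\to\mathbf G/\mathbf G^0$) makes this subgroup normal in $\mathcal M$ and of finite index.

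The key step is to identify the Zariski closure of $\mathcal M'$. Since $\mathcal M$ is dense in $\mathbf G$ and $\mathbf G/\mathbf G^0$ is finite, $\mathcal M$ meets every coset of $\mathbf G^0$; writing $\mathcal M=\bigcup_i g_i\mathcal M'$ over finitely many coset representatives $g_i\in\mathcal M$ and taking Zariski closures gives $\mathbf G=\bigcup_i g_i\,\overline{\mathcal M'}$, so $\overline{\mathcal M'}$ has finite index in $\mathbf G$ and therefore contains $\mathbf G^0$; as $\mathcal M'\subset\mathbf G^0$ we conclude $\overline{\mathcal M'}=\mathbf G^0$. By the density theorem for Fuchsian systems, the differential Galois group of the pulled-back system over $\C(\tilde C)$ is then this Zariski closure $\mathbf G^0$. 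Equivalently, and this is the conceptual heart, the differential Galois group over the field of algebraic functions $\overline{\C(t)}$ is precisely $\mathbf G^0$, the finite component group $\mathbf G/\mathbf G^0$ being exactly what the passage to algebraic coefficients trivializes; this is the fact from \cite{boga11} that I would rely on.

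With $\mathbf G^0$ identified as the governing group, the dimension count is a verbatim repetition of the argument already given for the rational case, with $(\mathbf G,V,V_1)$ replaced by $(\mathbf G^0,V^0,V_1^0)$. Take $V^0$ to be the smallest $\mathbf G^0$-invariant subspace containing the cycle $\delta(b)$; by density this is the $\C$-span of the $\mathcal M'$-orbit of $\delta(b)$. The minimal order of a linear ODE over $\overline{\C(t)}$ satisfied by $I(t)=\int_{\delta(t)}\omega$ equals the dimension of the $\C$-span of the Galois orbit $\{\int_{\sigma\delta(t)}\omega:\sigma\in\mathbf G^0\}$, that is, the dimension of the image of $V^0$ under the $\C$-linear period map $\gamma\mapsto\int_{\gamma(t)}\omega$. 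Since $V_1^0$ is by definition the kernel of this map restricted to $V^0$, the image has dimension $\dim V^0-\dim V_1^0$, which yields $d^0=\dim V^0-\dim V_1^0$.

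The main obstacle I anticipate is the second step: cleanly establishing that the differential Galois group drops exactly to the identity component $\mathbf G^0$ once algebraic coefficients are allowed. This rests on (a) the density of $\mathcal M$ in $\mathbf G$, which requires the Fuchsian (regular-singular) nature of the Picard--Fuchs connection, and (b) the group-theoretic lemma $\overline{\mathcal M\cap\mathbf G^0}=\mathbf G^0$ above, together with the bookkeeping that a single finite covering realizes the optimum simultaneously for all competing algebraic-coefficient operators. Once this identification is in place, the remaining dimension count carries no new difficulty beyond what the rational case already contains.
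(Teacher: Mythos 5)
The paper itself gives no proof of this theorem; it defers entirely to \cite{boga11}, and your argument is essentially the proof given there. The three ingredients you identify --- replacing $\C(t)$ by a finite extension realized as a branched covering adapted to the finite quotient $\mathcal M\to\mathbf G/\mathbf G^0$, the observation that the Zariski closure of the finite-index subgroup $\mathcal M\cap\mathbf G^0$ is exactly $\mathbf G^0$ (together with Schlesinger density for the regular-singular Picard--Fuchs connection), and the dimension count with $V^0$ taken to be the \emph{minimal} $\mathbf G^0$-invariant subspace containing $\delta(b)$ (the correct reading of the paper's loosely stated hypothesis) --- are precisely the ones used in the reference, so your proposal is correct and follows the same route.
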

 \begin{corollary}
 If the representation of $\mathbf G^0$ on $V^0$ is irreducible, then the minimal degree of the differential operator (\ref{md}) equals  $\dim V^0$.
 \end{corollary}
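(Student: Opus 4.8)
The plan is to deduce the corollary directly from the Theorem just stated, which gives $d^0 = \dim V^0 - \dim V_1^0$. Under the irreducibility hypothesis the only work is to show that $\dim V_1^0 = 0$. First I would recall that $V_1^0$ was not an arbitrary subspace but was constructed precisely as a virtually invariant sub-plane of $V^0$, that is, a sub-representation of the connected group $\mathbf G^0$ contained in $V^0$. This stability under $\mathbf G^0$ is exactly the structural property that makes a Schur-type argument available: one does not need to analyze the integral map in detail, only to know that $V_1^0$ is $\mathbf G^0$-invariant.

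Next I would invoke irreducibility of the representation of $\mathbf G^0$ on $V^0$ in its crudest form: the only $\mathbf G^0$-invariant subspaces of $V^0$ are $\{0\}$ and $V^0$ itself. Since $V_1^0$ is one of these invariant subspaces, we obtain the dichotomy $V_1^0 = \{0\}$ or $V_1^0 = V^0$. It then remains only to exclude the second alternative. The equality $V_1^0 = V^0$ would mean, by the very definition of $V_1^0$, that $\int_{\gamma(t)} \omega \equiv 0$ for every $\gamma \in V^0$ and its associated family of cycles $\gamma(t)$; equivalently, the Abelian integral vanishes identically on the whole of $V^0$. This is the degenerate situation in which no nontrivial equation is required, and it is ruled out by the standing assumption that the family $\delta(t)$ and the form $\omega$ produce a nonzero Abelian integral generating $V^0$ under the $\mathbf G^0$-action. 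Hence $V_1^0 = \{0\}$, so $\dim V_1^0 = 0$, and the Theorem yields $d^0 = \dim V^0$.

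The only genuinely delicate point is this last exclusion of the case $V_1^0 = V^0$; the Schur dichotomy itself is immediate once $V_1^0$ is recognized as a sub-representation. I would therefore be careful to state (or to read off from the setup) the nondegeneracy hypothesis that $\omega$ restricts to a nonzero integral on $V^0$, since without it the conclusion $d^0 = \dim V^0$ can fail trivially when $\dim V^0 > 0$ but all integrals over $V^0$ vanish. Under the natural assumption that $V^0$ is the $\mathbf G^0$-invariant plane carrying a nontrivial period of $\omega$, the argument is complete.
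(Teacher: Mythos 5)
Your argument is correct and is the evident derivation from the Theorem: since $V_1^0$ is by construction a $\mathbf G^0$-sub-representation of the irreducible $V^0$, the Schur dichotomy gives $V_1^0=\{0\}$ or $V_1^0=V^0$, and the latter is excluded precisely by the standing (implicit) assumption that $\delta\in V^0$ and $I(t)=\int_{\delta(t)}\omega\not\equiv 0$, so $d^0=\dim V^0-0$. The paper itself offers no proof at this point (it defers to the cited reference of Bobie\'nski and Gavrilov), but your reasoning --- including your explicit flagging of the nondegeneracy hypothesis needed to rule out $V_1^0=V^0$ --- is exactly the intended argument.
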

 See \cite{boga11} for proofs.

\section{Topology of the fibration, defined by the polynomial  $x^p y^p(1-x-y)$ .}
\label{section4}
Denote
 $$f= x^py^p(1-x-y)$$ for some $p\in \mathbb{N}^*$. The polynomial $f$ has two critical values $t_1= \frac{p^{2p}}{(2p+1)^{2p+1} }$ and $t_2=0$ 
 and non-isolated critical points along the lines $x=0$ and $y=0$.
For $t\neq t_{1,2}$ the algebraic curve $\Gamma_t = \{(x,y)\in \C^2: f(x,y)= t\}$ is a smooth Riemann surface and its fundamental group has $2p+2$ generators, see Fig.\ref{fig1}. As $\Gamma_t$
has three punctures (at infinity), then it is a genus $p$ algebraic curve. 
 We wish to describe the "continuous variation" of $\Gamma_t$ when $t$ varies along closed circuits in  $\C\setminus \{t_1,t_2\}$, or equivalently,the topology of the fibration
 \begin{align*}
f:\C^2 & \to \C\setminus \{t_1,t_2\} \\
(x,y) &\mapsto  f(x,y)=x^py^p(1-x-y)
\end{align*}
The calculation of this  geometric monodromy is in general a difficult task, see the survey of Siersma \cite{sier00}.

To begin with, we first localize $f$ at the singular points $(0,1)$ and $(1,0)$ and obtain a germ of analytic function with non-isolated critical points.
We study first their geometric monodromy, which is straightforward.
\subsection{The germ   $(x+\dots)^p(y+\dots)$. }
\label{toy}
Let $f: \C^2,0 \to \C,0$ be a germ of analytic function, such that
$$
f_0= \{ (x,y)\in \C^2 : f(x,y)=0 \}
$$
defines a germ of  a divisor with simple normal crossing and multiplicities $1$ and $p$. In appropriate coordinates in a suitable neighborhood  of the origin we have
$f(x,y)=x^py$, which will be assumed until the end of this section.

The  \emph{marked  fibration } associated to $f$ is by definition the usual fibration
\begin{align*}
f:\C^2 & \to \C\setminus 0 \\
(x,y) &\mapsto  f(x,y)=x^py
\end{align*}
whose fibers  are the Riemann surfaces  (topological cylinders)
$$f_t=\{(x,y): x^py=t\}  
$$ 
with $p+1$ marked points corresponding to the intersection of $f_t$ with the two fixed lines $\{x=1\}$ and $\{y=1\}$
$$S_t= \{ (1,t), (t^{1/p}e^{2k\pi i/p},1), k=0,1\dots,p-1 \} .$$
Continuous deformation of $t$ induces an isotopy of the marked fibers $f_t$. A continous variation of $t$ along a closed circuit about the origin induces therefore a diffeomorphism (geometric monodromy)
$$
M: f_t \to f_t, t\neq  0
$$
defined up to an isotopy, which permutes the marked points. 
We wish to describe  $M$
along the same lines, as in the case of an isolated   singularity of Morse type, $p=1$, e.g. \cite{agv88v2}. 

It is convenient to consider the 
 fundamental groupoid  $\pi_1(f_t,S_t)$ of the pair $(f_t,S_t)$ \cite{brow06}, which replaces the common fundamental group $\pi_1(f_t,*)$ in the case $p=1$. Recall that this groupoid  is the set of homotopy classes of loops $\gamma: [0,1] \to f_t$ such that $\gamma(0), \gamma(1) \in S_t$. Two loops $\gamma_1, \gamma_2$ are composable, if $\gamma_2(1)=\gamma_1(0)$ and in this case the homotopy class of $\gamma_1 \circ \gamma_2 \in \pi_1(f_t,S)$ is well defined. In the case when $S_t=*$ is a single point $\pi_1(f_t,S_t)= \pi_1(f_t,*) $ is the usual fundamental group of the fiber $f_t$.
 The fundamental groupoid $\pi_1(f_t,S)$ is freely generated by  $p+1$ loops, as shown on fig.\ref{fig2} in the particular case $p=3$.

 The geometric monodromy $M$ defines a  homomorphisms 
$$m_*: \pi_1(f_t,S_t) \to \pi_1(f_t,S_t) . 
$$
 Clearly, $M$ permutes cyclically the points
$ (t^{1/p},1)$ and fixes $(t,1)$. Define a loop $\gamma_0$ connecting $(t,1)$ to a point in the set $ \{(t^{1/p},1)\}$ and then inductively
\begin{equation}
\label{gk}
\gamma_k= m^k _* \gamma_0, k=0,1, ..., p .
\end{equation}
\begin{proposition}
\label{paths}
The $p+1$ loops $\gamma_0, \gamma_1, \dots, \gamma_p$ generate the groupoid $\pi_1(f_t,S_t)$, the closed loop $\alpha =\gamma_0^{-1} \circ \gamma_p$ generates the fundamental group $\pi_1(f_t,*)=\Z$.
 (see fig. \ref{fig2}). 
\end{proposition}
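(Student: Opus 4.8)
The plan is to carry out the whole argument inside a single fiber, which I first identify explicitly. Since $f=x^py$, projection to the $x$-coordinate gives a diffeomorphism $f_t\to\C^*$, $(x,y)\mapsto x$, with inverse $x\mapsto(x,tx^{-p})$; in particular $\pi_1(f_t,*)\cong\pi_1(\C^*)=\Z$, generated by a small loop about the puncture $x=0$. Under this identification the $p+1$ marked points of $S_t$ become the single point $x=1$ (from $f_t\cap\{x=1\}$) together with the $p$ points $x=t^{1/p}e^{2k\pi i/p}$, $k=0,\dots,p-1$ (from $f_t\cap\{y=1\}$), which for small $|t|$ lie evenly spaced on the circle of radius $|t|^{1/p}$ around the puncture. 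This setup is the easy part and fixes all the combinatorial data.

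Next I would determine the geometric monodromy $m_*$. Its permutation action on $S_t$ is immediate from the defining lines: the intersection $f_t\cap\{x=1\}=(1,t)$ returns to itself as $t$ runs along the loop $t\mapsto e^{2\pi is}t$, whereas the intersection $f_t\cap\{y=1\}$ consists of the $p$-th roots of $t$, which are cyclically rotated because $t^{1/p}\mapsto e^{2\pi i/p}t^{1/p}$; hence $m_*$ fixes $(1,t)$ and sends the root indexed by $k$ to the one indexed by $k+1$. To control the action on homotopy classes of \emph{paths}, and not merely on endpoints, I would represent $m_*$ by a partial rotation $x\mapsto e^{i\phi(|x|)}x$ of the cylinder $\C^*$ (obtained from the lift $x\mapsto e^{2\pi is/p}x$ by an isotopy supported away from the roots) which rotates the inner circle carrying the roots by $2\pi/p$ and is the identity near $x=1$. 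Its $p$-th power $m_*^p$ is then a full rotation of the inner region relative to the fixed outer region, that is, a Dehn twist about a core circle separating the roots from $x=1$.

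Making this last fact precise---that $m_*^p$ is isotopic to exactly one such Dehn twist, and not merely to some diffeomorphism inducing the trivial permutation---is the step I expect to be the main obstacle, since it is what forces $\gamma_p=m_*^p\gamma_0$ to differ from $\gamma_0$ by precisely \emph{one} loop about the puncture. Granting it, the second assertion is immediate: because $m_*^p$ fixes the roots, $\gamma_0$ and $\gamma_p$ share their endpoints $(1,t)$ and $(t^{1/p},1)$, so $\alpha=\gamma_0^{-1}\circ\gamma_p$ is a genuine loop based at $(1,t)$, and by the Dehn-twist description it winds once about $x=0$ and therefore generates $\pi_1(f_t,*)=\Z$.

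Finally, for the generation of the groupoid I would invoke the standard structure of the fundamental groupoid on a finite set of base points: for $X$ connected and $S\subset X$ finite, $\pi_1(X,S)$ is generated by the edges of any spanning tree of $S$ together with lifts of generators of $\pi_1(X,x_0)$ at a chosen vertex $x_0$. Here the $p$ paths $\gamma_0,\dots,\gamma_{p-1}$ join $x_0=(1,t)$ to the $p$ distinct roots and hence form a spanning tree (a star) on the $p+1$ marked points, while $\alpha$ generates $\pi_1(\C^*)=\Z$; since $\gamma_p=\gamma_0\circ\alpha$, the families $\{\gamma_0,\dots,\gamma_p\}$ and $\{\gamma_0,\dots,\gamma_{p-1},\alpha\}$ generate the same groupoid, which is the whole of $\pi_1(f_t,S_t)$. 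This completes the proof.
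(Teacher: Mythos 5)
Your argument follows the same route as the paper: both identify $f_t$ with $\C^*$ via the projection $(x,y)\mapsto x$, locate the marked points, and read off the monodromy from the cyclic rotation of the $p$-th roots of $t$, the paper simply delegating to its figure the two points you make explicit (that $m_*^p$ is a single Dehn twist about a core circle separating the roots from $x=1$, and the spanning-tree generation of the groupoid). The step you flag as the main obstacle is in fact settled by your own partial-rotation model of the trivialization, so your write-up is a correct and more detailed version of the same proof.
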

\begin{proof}
The projection $(x,y) \to x$ maps $f_t$ isomorphically to $\C^*$, and the images of the marked points are
$$
1, t^{1/p}e^{2k\pi i/p}, k=0,1\dots,p-1 .
$$
Alternatively, in the ball $B_R= \{(x,y)\in \C^2: |x|^2+|y|^2 \leq R^2 \}$, for sufficiently small $|t|$ the surface
$f_t\cap B_R$
is projected under $(x,y)\to x$ to the annulus shown on  fig.\ref{fig2}.
When $t$ makes one turn around the origin, in a clockwise direction,  the marked points $t^{1/p}e^{2k\pi i/p}$ permute cyclically, and the corresponding paths $\gamma_k$ connecting $1$ to $t^{1/p}e^{2k\pi i/p}$ are as on fig. \ref{fig2}. 
Note that in the case $p=1$ we get the usual Picard-Lefschetz formula.
\end{proof}
For a further use, note the following
\begin{corollary}
\label{cor1}
It is always possible to choose the initial loop $\gamma_0$ in such a way, that $\gamma_i$, $0\leq i \leq p$, are non-intersecting, and non self-intersecting loops. In this case the union $\cup_{i=0}^p \gamma_i\subset f_t$ is an embedded planar graph, which is a deformation retract of the marked cylinder $f_t$.
\end{corollary}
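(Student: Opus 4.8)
The plan is to transport everything to the model fiber $f_t\cong\C^*$ and to realise the geometric monodromy $M$ as a concrete fractional twist, for which the iterates of a carefully chosen $\gamma_0$ can be written down explicitly and checked to be disjoint. Using the biholomorphism $(x,y)\mapsto x$ from the proof of Proposition \ref{paths}, I identify $f_t$ with $\C^*$ and the marked set $S_t$ with $\{1\}\cup\{\zeta_0,\dots,\zeta_{p-1}\}$, where $\zeta_k=t^{1/p}e^{2k\pi i/p}$ are the $p$-th roots of $t$, all of modulus $\rho:=|t|^{1/p}$ and, for small $|t|$, clustered near the puncture $0$, while $1$ sits on the unit circle. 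Here $M$ fixes $1$ and rotates the $\zeta_k$ cyclically, and I model it by the explicit diffeomorphism $M(re^{i\theta})=re^{i(\theta+\frac{2\pi}{p}\chi(r))}$, with $\chi$ smooth, equal to $1$ for $r\le\rho$, equal to $0$ for $r\ge 1$, and strictly decreasing on $[\rho,1]$. This fixes $1$, sends $\zeta_k\mapsto\zeta_{k+1}$, and its $p$-th power is a full Dehn twist about the core of the annulus $\rho\le|x|\le 1$, consistent with $\alpha=\gamma_0^{-1}\circ\gamma_p$ generating $\pi_1(f_t)$.

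Next I take $\gamma_0$ to be the image of the radial segment $[\rho,1]$, so that $\gamma_k=m_*^k\gamma_0$ is the arc $r\mapsto re^{i\frac{2\pi k}{p}\chi(r)}$, $r\in[\rho,1]$. Each such arc is simple, since its modulus $r$ is strictly monotone along it. Two arcs $\gamma_j,\gamma_k$ with $j\ne k$ are disjoint away from the vertices: at a fixed radius $r\in(\rho,1)$ their arguments $\frac{2\pi j}{p}\chi(r)$ and $\frac{2\pi k}{p}\chi(r)$ are distinct because $\chi(r)\in(0,1)$, and they agree only at $r=\rho$ (distinct roots) or at $r=1$ (the common vertex $1$). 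The same computation applies to $\gamma_p$, whose argument $2\pi\chi(r)$ sweeps a full turn but whose modulus is again monotone, so $\gamma_p$ is a simple arc meeting the others only at the shared vertices $\zeta_0$ and $1$. Hence $G=\bigcup_{k=0}^{p}\gamma_k$ is an embedded graph whose vertices are the $p+1$ marked points and whose edges are the $p+1$ arcs.

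For the deformation-retract statement I observe that $G$ is connected, every edge containing the vertex $1$, and that the inclusion $G\hookrightarrow\C^*$ induces an isomorphism on $\pi_1$, since by Proposition \ref{paths} the loop $\alpha=\gamma_0^{-1}\circ\gamma_p\subset G$ generates $\pi_1(f_t)=\Z$. The simple closed curve $\gamma_0\cup\gamma_p$ winds once about $0$ and thus separates the plane into two simply connected regions, one retaining the puncture $0$ and the other retaining $\infty$; the remaining arcs $\gamma_1,\dots,\gamma_{p-1}$ are whiskers attached to $1$ (their far endpoints $\zeta_k$ are leaves), so they do not bound any further complementary component. Consequently $\C^*\setminus G$ is the disjoint union of two collar neighbourhoods of the two ends, each deformation retracting onto its frontier in $G$, and therefore $G$ is a strong deformation retract of $f_t$, as claimed.

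I expect the main obstacle to be the embeddedness of $\gamma_p$: because it shares both endpoints with $\gamma_0$ and must wind once around the puncture, a careless choice of $\gamma_0$ makes the iterates overlap along a whole subarc near $1$ rather than meeting there only at the vertex. Insisting that $\chi(r)>0$ for all $r<1$ is precisely what forces the arcs to separate at every interior radius and to coincide only at the marked points; choosing $\gamma_0$ so as to guarantee this is the content of the phrase ``it is always possible to choose the initial loop $\gamma_0$'' in the statement.
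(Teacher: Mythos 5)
Your argument is correct and is essentially the paper's own (implicit) proof made explicit: the paper establishes the corollary only through the projection $(x,y)\mapsto x$ and the picture of the spiralling arcs in Figure \ref{fig2}, and your fractional-twist model $re^{i\theta}\mapsto re^{i(\theta+\frac{2\pi}{p}\chi(r))}$ with the cutoff $\chi$ is precisely a formula for that picture, after which the disjointness computation and the cut-and-retract argument for the complement go through. The one step you assert rather than derive is that this model represents the geometric monodromy up to isotopy rel $S_t$ (a diffeomorphism inducing the correct permutation of marked points is only determined up to full Dehn twists about the core), but this is justified by the evident trivialization $\Phi_\theta(re^{i\psi})=re^{i(\psi+\frac{\theta}{p}\chi(r))}$ of the family of marked annuli over the circle $|t|=\mathrm{const}$.
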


Consider finally the  relative homology group  $H_1(f_t,S_t)= H_1(f_t,S_t,\Z)$ which is isomorphic as a $\Z$-module to $ \Z^{p+1}$. The generators of 
$H_1(f_t,S_t)$ are represented by the paths $\gamma_0, \gamma_1, \dots, \gamma_p$ defined in Proposition \ref{paths}. We conclude that the corresponding equivalence classes of paths 
 $$
[\gamma_0], [\gamma_1], \dots, [\gamma_{p-1}],  [\gamma_p] = [\alpha] +  [\gamma_0] \in H_1(f_t,S_t) .
$$
define a basis   of  the relative homology group.

\begin{figure}
\begin{center}
\def\svgwidth{0,5\columnwidth}
 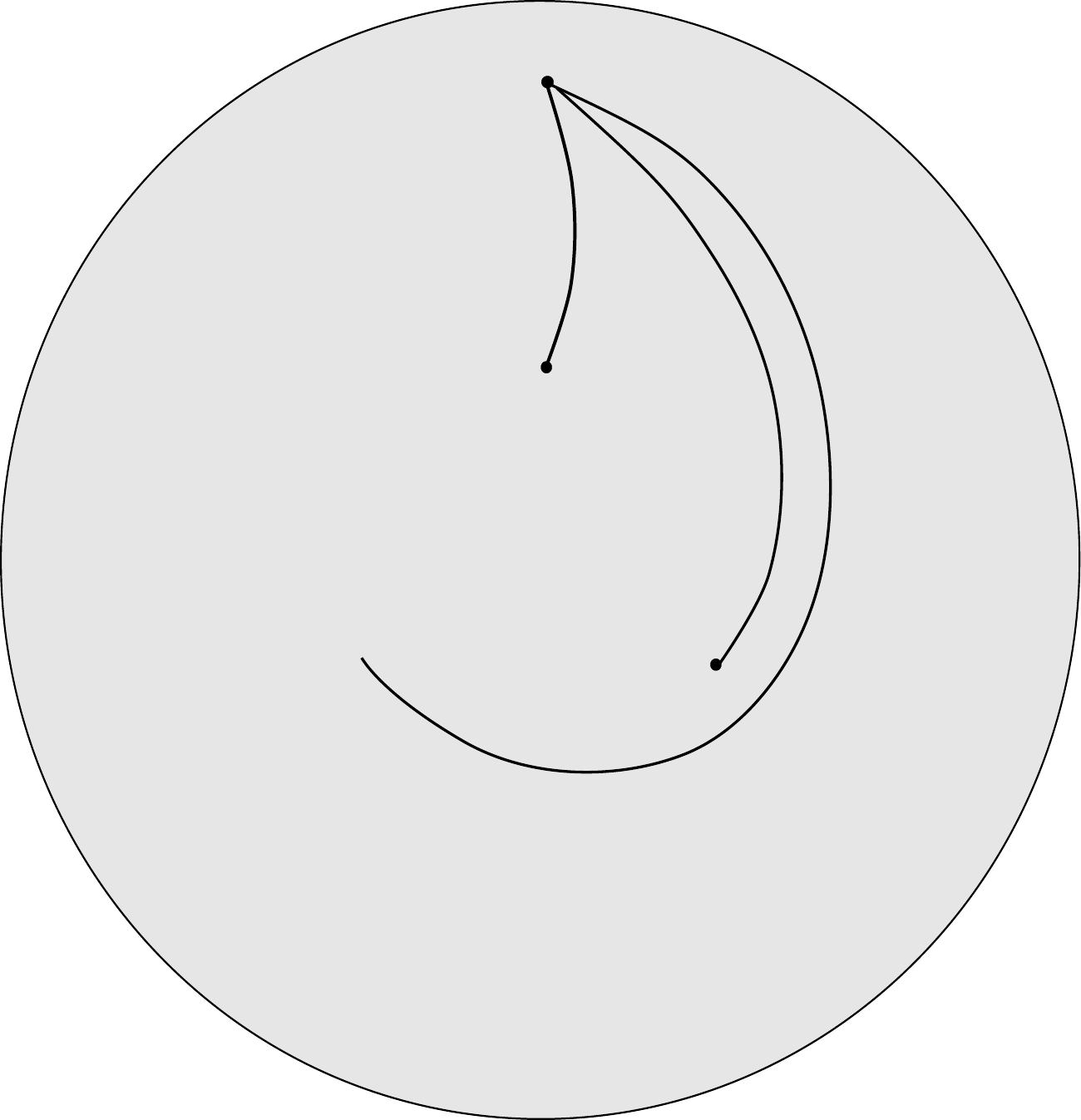
 \end{center}
 \caption{The fundamental groupoid $\pi_1(f_t,S)$ of the cylinder $f_t = \{x^3 y = t\}$ with four marked points.}
  \label{fig2}
 \end{figure}
The diffeomorphism $M=f_t \to f_t$ induces a homomorphism (Picard-Lefschetz monodromy operator)
$$M_* :H_1(f_t,S_t) \to H_1(f_t,S_t). $$
 Note that $M_*[\alpha]=[\alpha]$ and in the basis
$$
[\gamma_0], [\gamma_1], \dots, [\gamma_{p-1}], [\alpha] 
$$
$M_*$ is represented  by the matrix
$$
\left(\begin{array}{cccc|c}
0 & \dots & 0 & 1 & 0 \\
\hline
1 & \dots & 0 & 0 & 0 \\
\vdots & \ddots & \vdots & \vdots & \vdots \\
0 & \hdots & 1 & 0 & 0
\\0 & \hdots & 0 & 1 & 1\end{array}\right)
$$
with characteristic polynomial 
$$
(-1)^{p+1}(\lambda-1) (\lambda^p - 1) .
$$

 \subsection{The topology of the fiber  $f_t= \{x^py^p(1-x-y)=t\}$ when $t$ is close to $t_2=0$.}
 Our aim in this section is to construct  explicit generators for the fundamental group of $f_t$ which allows a simple description of the action of the monodromy transformations of the fibration, defined by $f$.
 The generators will be presented in the form of an embedded graph, which is a deformation retract of $f_t$ (see Corollary \ref{cor1}).
 \begin{figure}
\begin{center}
\def\svgwidth{0,4\columnwidth}
 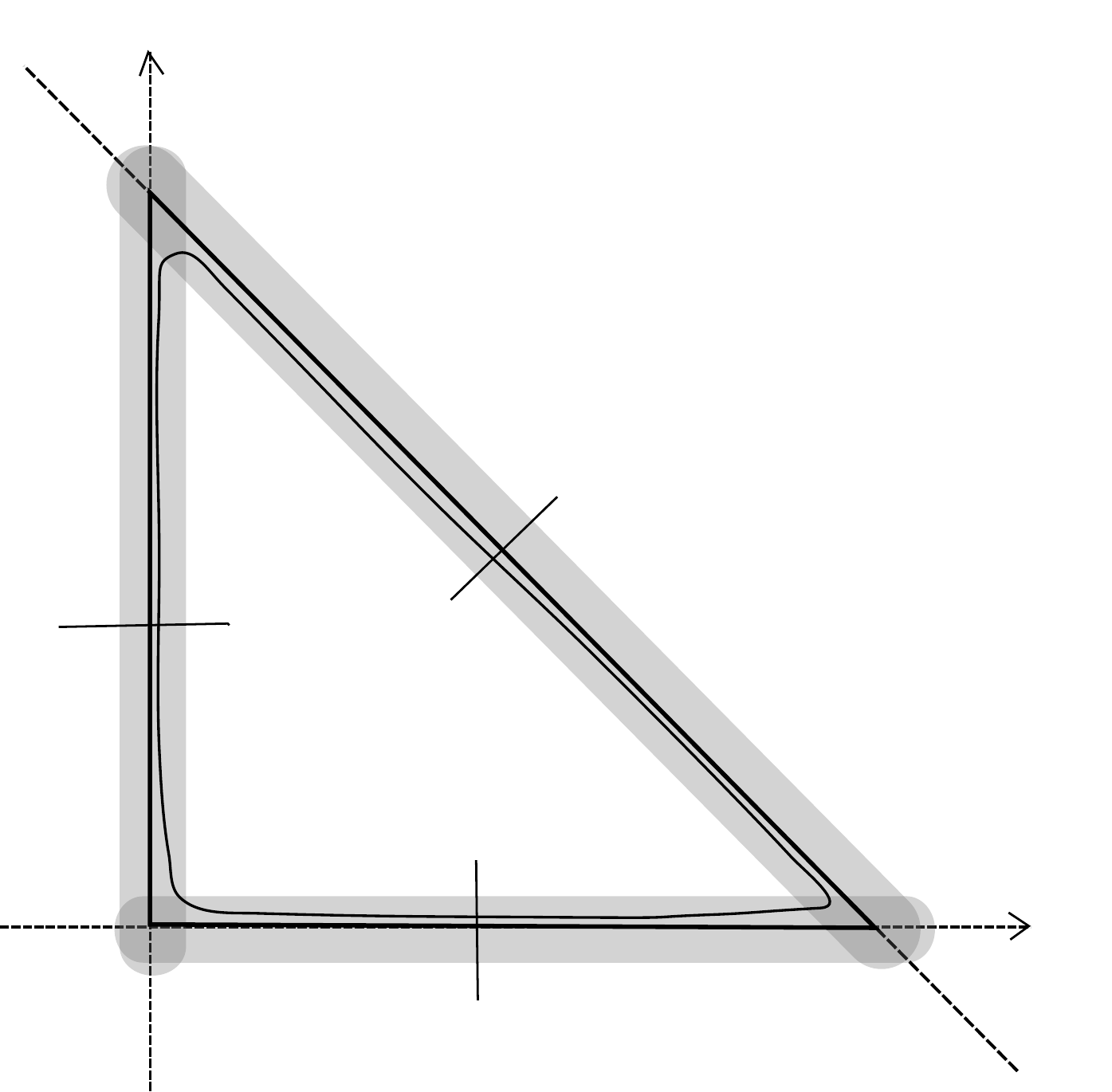
 \end{center}
 \caption{Tubular neighborhood of the triangle, with vertices at the singular points $(0,0), (1,0), (0,1)$ in $\R^2$.}
 \label{fig4}
 \end{figure}
 
 Namely, consider the real triangle, with vertices  the singular points $(0,0), (1,0), (0,1)$ in $\R^2$. Let $U\subset \C^2$ be a suitable tubular neighborhood of this triangle. We suppose that $\partial U$ is transversal to the complex lines
 $$
 \{ x=0, y=0, x+y+1=0 \} \subset \C^2 
 $$
and moreover $f_0\cap U$ is a deformation retract of $f_0$. 
It follows that for  sufficiently small $|t|$ the border  $\partial U$ is transversal also to $f_t$, and that $f_t\cap U$ is a deformation retract of $f_t$. This allows to localize our description of the fiber $f_t$ near the triangle with vertices $(0,0), (1,0), (0,1)$, see fig. \ref{fig4}.

Consider three cross-sections (complex discs) transversal to the sides, dividing the triangle into three pieces, with corresponding tubular neighborhoods $U_{13}, U_{23}, U_{12}$, where  $U= U_{13}\cup U_{23} \cup U_{12}$. The cross sections intersect the fiber $f_t\subset \C^2$  for sufficiently small
non-zero  $|t|$ in  exactly $p$, $p$ and $1$ points respectively, which will be the marked points from the  section \ref{toy}. Let $U_{12}$ be the tubular neighborhood, containing $(0,0)$. Then $f_t\cap U_{12}$ has $p$ connected components (topological cylinders) which coincide with the fibers of a Morse polynomial, and it is retracted to $p$ disjoint segments.
The  fibers $f_t\cap  U_{13}$ and $f_t\cap  U_{23}$ are described as in the section \ref{toy}, Corollary \ref{cor1}.

We construct now a graph, embedded in $f_t\cap U$ which is a deformation retract of $f_t$. For this purpose we take together the corresponding graphs near the singular point $(0,0), (1,0), (0,1)$, constructed in section \ref{toy}. The assembling of these graphs is shown on fig. 3 in the case $p=3$, the general case of arbitrary $p$ being analogous. It is easy to check (by making use of a partion of the unity) that the resulting graph is embedded in $f_t$ and is a deformation retract of $f_t$. 
\begin{figure}
\begin{center}
 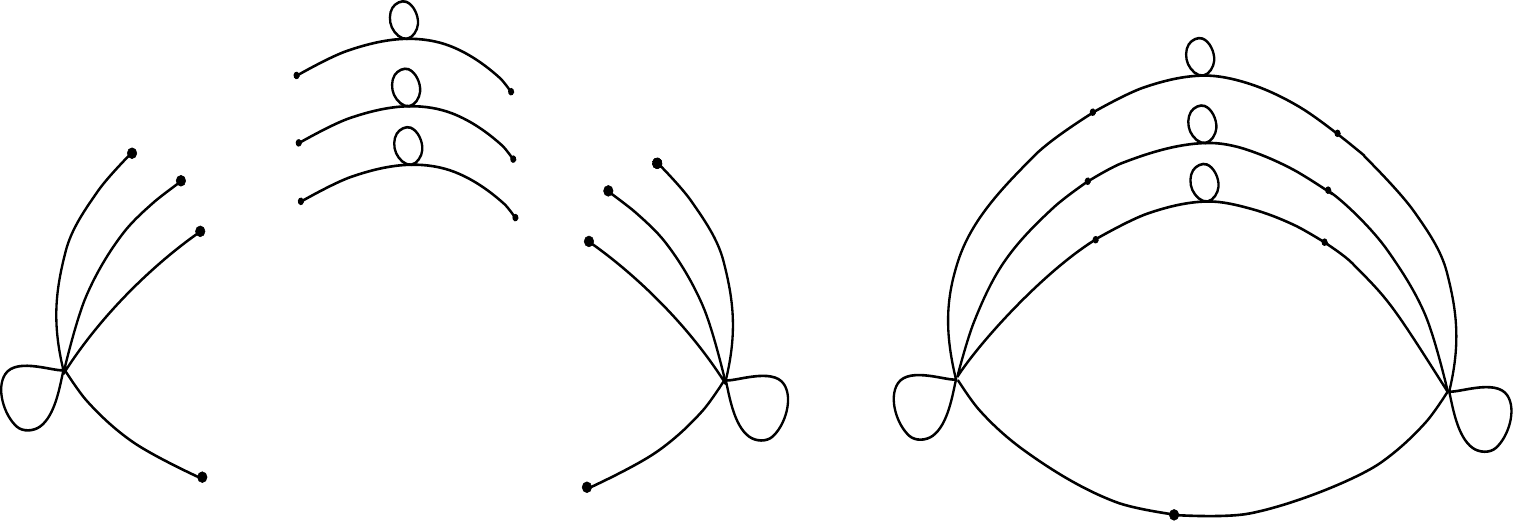
 \end{center}
 \caption{Assembling of a deformation retract of the fiber $f_t=\{x^3y^3(1-x-y)=t\}$.}
 \label{fig3}
 \end{figure}

\subsection{ The monodromy of the fibers $f_t$ along closed paths near the singular value $t=0$.}
The next step is to describe the action of the monodromy  transformation $M : f_t\to f_t$, corresponding to the singular value $t=0$,  which once again easily follows from the preceding constructions. More specifically, we shall describe the linear operator $M_*\in Aut(H_1(f_t,\Z))$. We begin by choosing a suitable basis of $H_1(f_t,\Z)$. In a suitable neighborhood of $(x,y)=(0,0)$ the fiber $f_t= \{x^py^p(1-x-y)=t \}$ has $p$ connected components homeomorphic to cylinders, and each component carries a vanishing cycle denoted 
$$\delta_{12}^k, k=0, 1,\dots,p-1 . $$
At $(0,1)$ and $(1,0)$ we have vanishing cycles denoted respectively $\delta_{13}$ and $\delta_{23}$. We have therefore $p+2$ vanishing cycles, additional $p$ cycles are constructed as follows
$$
\delta_k= M^k \delta_0, k=0,1,\dots,p-1
$$
where $\delta_0=\delta(t)$ is represented by the oval of $f_t$ for $t_1<t<t_2$. The $2p+2$ cycles which we described are independent in $H_1(f_t,\Z)$. 
\begin{proposition}
The monodromy operator $M_*$ acts as follows
$$
\boxed{\delta_{12}^0 \to \delta_{12}^{1} \to \dots \to  \delta_{12}^{p-1} \to \delta_{12}^0 , \;\; \delta_{13} \to \delta_{13},  \delta_{23} \to \delta_{23}}
$$
and
$$
\boxed{
\delta_0  \to \delta_{1}\to \dots \to \delta_{p-1},\;\; \delta_{p-1} \to \delta_0 - \delta_{12}^0 - \delta_{13}- \delta_{23} .}
$$
\end{proposition}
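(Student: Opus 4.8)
The plan is to reduce the computation of the single operator $M_*$ attached to the loop around $t=0$ to the three local pictures near the vertices $(0,0)$, $(1,0)$, $(0,1)$, glued along the deformation retract of $f_t$ assembled in fig.~\ref{fig3}. The key observation is that one clockwise turn of $t$ about $0$ rotates $t^{1/p}$ by the angle $2\pi/p$, and this single rotation drives the monodromy at all three corners simultaneously. The action of $M_*$ on each basis class is then read off by following how its supporting edges in the retract graph are transported, using the corner-by-corner models established in section~\ref{toy} and in the description of $f_t\cap U_{12}$.

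First I would treat the local vanishing cycles, which give the first boxed line. Near $(0,0)$ we have $f\approx x^py^p$, so $f_t\cap U_{12}=\{x^py^p=t\}$ splits, via $z=xy$, into the $p$ cylinders $\{xy=t^{1/p}e^{2k\pi i/p}\}$, each carrying $\delta_{12}^k$; a clockwise turn of $t$ sends $t^{1/p}e^{2k\pi i/p}\mapsto t^{1/p}e^{2(k+1)\pi i/p}$, whence the cyclic permutation $\delta_{12}^k\mapsto\delta_{12}^{k+1}$ with indices taken mod $p$. Near $(0,1)$ and $(1,0)$ the germ is, after the local change of coordinates bringing $f$ to the normal form $x^py$, exactly the one studied in section~\ref{toy}, and the classes $\delta_{13}$, $\delta_{23}$ are the closed loops $\alpha=\gamma_0^{-1}\circ\gamma_p$ of Proposition~\ref{paths}. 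Since $M_*[\alpha]=[\alpha]$ there, we obtain $\delta_{13}\mapsto\delta_{13}$ and $\delta_{23}\mapsto\delta_{23}$.

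For the second boxed line, the classes $\delta_k=M^k\delta_0$ satisfy $M_*\delta_k=\delta_{k+1}$ for $0\le k\le p-2$ by definition, so the only genuine content is the value of $M_*\delta_{p-1}=M_*^p\delta_0$. Geometrically, the oval $\delta_0$ has a corner piece near $(0,0)$ lying in the cylinder indexed by $k=0$; one clockwise turn slides that piece into the next cylinder ($k\mapsto k+1$), which is precisely the passage $\delta_0\mapsto\delta_1$. After $p$ turns, $t^{1/p}$ has returned to its initial value, every local picture is back to itself, and the transported oval is again homologous to an oval; it can differ from $\delta_0$ only by the cycles vanishing at the three corners through which the oval has been dragged. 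Reading these contributions off the graph of fig.~\ref{fig3}, and fixing orientations by the bases of $H_1(f_t,S_t)$ chosen at each corner, is meant to yield $M_*^p\delta_0=\delta_0-\delta_{12}^0-\delta_{13}-\delta_{23}$.

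The main obstacle is this last step: determining exactly which vanishing cycles occur in $M_*\delta_{p-1}$, with which signs, and why the index $0$ (rather than a sum over $k$) appears at the corner $(0,0)$. I would handle it by writing the oval as an explicit closed edge-path in the retract of fig.~\ref{fig3}, applying the corner rules above edge by edge, and collecting the resulting boundary terms. The signs can be cross-checked against the intersection form, using that $\delta_{13}$, $\delta_{23}$, $\delta_{12}^0$ are supported near distinct corners and hence mutually disjoint, each meeting the transported oval once. A further consistency check is provided by summing the second boxed relation: it forces $M_*(\delta_0+\cdots+\delta_{p-1})=(\delta_0+\cdots+\delta_{p-1})-\delta_{12}^0-\delta_{13}-\delta_{23}$, which is compatible with the already established invariance of $\sum_k\delta_{12}^k$ and confirms that no additional terms enter.
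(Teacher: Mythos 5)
Your overall strategy is the paper's: localize at the three corners, observe that the first boxed line and the relations $M_*\delta_k=\delta_{k+1}$ ($k\le p-2$) follow immediately from the local models, and reduce everything to the single computation $M_*^p\delta_0$. The gap is that this one nontrivial computation is never actually performed: you describe a plan ("writing the oval as an explicit closed edge-path \dots is meant to yield $M_*^p\delta_0=\delta_0-\delta_{12}^0-\delta_{13}-\delta_{23}$") and explicitly flag the indices and signs as an unresolved obstacle. Neither of your proposed cross-checks closes it. Summing the second boxed relation is vacuous for the index question, since $M_*\delta_{p-1}=\delta_0-\delta_{12}^j-\delta_{13}-\delta_{23}$ passes that test for every $j$. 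The intersection-form check is close to circular, because the orientations of $\delta_{13},\delta_{23}$ are only pinned down (in Proposition \ref{intersections}) by the normalization $(\delta_0\cdot\delta_{13})=(\delta_0\cdot\delta_{23})=1$, which is a choice made compatibly with, not independently of, the monodromy formula.

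The missing idea, which is the whole content of the paper's (admittedly terse) proof, is that $M^p$ fixes every marked point and therefore acts on each local \emph{relative} homology group $H_1(f_t\cap U_{ij},S_t)$; by Proposition \ref{paths} (equivalently, the $p$-th power of the matrix at the end of section \ref{toy}) this action is $[\gamma_0]\mapsto[\gamma_p]=[\gamma_0]+[\alpha]$, i.e.\ $M^p$ is a Picard--Lefschetz (Dehn twist) operator about the local vanishing cycle at each corner. Writing $\delta_0$ as the concatenation of its three relative arcs, one per corner, and adding the three local corrections yields $M_*^p\delta_0=\delta_0-(\delta_0\cdot\delta_{12}^0)\delta_{12}^0-(\delta_0\cdot\delta_{13})\delta_{13}-(\delta_0\cdot\delta_{23})\delta_{23}$, with the signs those of the Picard--Lefschetz formula. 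The index $0$ at the corner $(0,0)$ is then purely a labelling convention: $\delta_0$ traverses exactly one of the $p$ cylinders of $f_t\cap U_{12}$, that cylinder returns to itself after $p$ turns, and one names its vanishing cycle $\delta_{12}^0$. Had you invoked this relative Picard--Lefschetz mechanism rather than deferring it to an unexecuted edge-by-edge bookkeeping, the argument would coincide with the paper's and be complete.
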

\begin{proof}
Having described the monodromy of the fiber $f_t$ localized around the singular points of $f$, everything is obvious, except $M_*\delta_{p-1}$.
To  compute $M_*\delta_{p-1}= M_*^p \delta_0$ we use the action of $M^p$ described in the relative homology of the local fibers. As noted in \cite{giho85} (or see fig. \ref{fig2}) $M^p$ is a Picard-Lefschetz operator, and
$$
 M^p_*\delta_0= \delta_0 - \delta_{12}^0 - \delta_{13}- \delta_{23} .
$$
\end{proof}
 It is straigthtforward to check that the characteristic and the minimal polynomials of $M_*$  are equal respectively to
 $$
 (\lambda-1)^2 (\lambda^p-1)^2 , (\lambda-1) (\lambda^p-1)^2
 $$
\begin{corollary}
The orbit of the cycle $\delta_0$ under the action of $M_*$ spans the $2p$-dimensional  vector space generated by
$$
\delta_i, \delta_{12}^i+\delta_{13}+\delta_{23}, \; i=0,1,\dots, p-1 .
$$
\end{corollary}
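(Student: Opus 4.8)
The plan is to identify the $M_*$-orbit of $\delta_0$ with the cyclic subspace $W=\mathrm{span}_{\C}\{M_*^j\delta_0 : j\geq 0\}$ and to show directly that $W$ equals the space $V$ spanned by the $2p$ vectors $\delta_i$ and $\eta_i:=\delta_{12}^i+\delta_{13}+\delta_{23}$, $i=0,1,\dots,p-1$. Everything reduces to iterating the formulas of the preceding Proposition, the only delicate point being the ``wrap-around'' that occurs once the exponent reaches $p$.

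First I would write out the first $2p$ iterates. For $0\leq k\leq p-1$ the Proposition gives immediately $M_*^k\delta_0=\delta_k$. At the $p$-th step the last relation of the Proposition enters, yielding $M_*^p\delta_0=M_*\delta_{p-1}=\delta_0-\delta_{12}^0-\delta_{13}-\delta_{23}=\delta_0-\eta_0$. Applying $M_*$ a further $k$ times, and using that $M_*$ permutes the $\delta_{12}^i$ cyclically while fixing $\delta_{13}$ and $\delta_{23}$, I obtain for $0\leq k\leq p-1$
$$
M_*^{p+k}\delta_0=\delta_k-\delta_{12}^k-\delta_{13}-\delta_{23}=\delta_k-\eta_k .
$$

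Next I would establish the two inclusions. For $V\subseteq W$, note that $\delta_k=M_*^k\delta_0\in W$ and that subtracting the two families above gives $\eta_k=M_*^k\delta_0-M_*^{p+k}\delta_0\in W$; hence all $2p$ generators of $V$ lie in $W$. For $W\subseteq V$, it suffices to check that $V$ is $M_*$-invariant and contains $\delta_0$: indeed $M_*\delta_k=\delta_{k+1}\in V$ for $k<p-1$, $M_*\delta_{p-1}=\delta_0-\eta_0\in V$, and $M_*\eta_k=\eta_{k+1}\in V$ with indices read modulo $p$, so the whole orbit of $\delta_0$ stays inside $V$. Combining the two inclusions gives $W=V$.

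Finally, for the dimension I would invoke the stated independence of the $2p+2$ cycles $\delta_i,\delta_{12}^i,\delta_{13},\delta_{23}$: in a relation $\sum_i a_i\delta_i+\sum_i b_i\eta_i=0$ the coefficient of each $\delta_{12}^i$ is $b_i$, forcing all $b_i=0$, whence all $a_i=0$; so the $2p$ generators are independent and $\dim V=2p$. The main (and essentially only) obstacle is tracking the cyclic index shift correctly through the wrap-around at step $p$; the remaining steps are routine bookkeeping. Conceptually, this wrap-around reflects the fact that $(M_*^p-1)\delta_0=-\eta_0$ with $\eta_0$ fixed by $M_*^p$, so that $(M_*^p-1)^2\delta_0=0$; this is the structural reason the cyclic subspace closes up after exactly $2p$ steps rather than continuing to grow, and it dovetails with the previously computed minimal polynomial $(\lambda-1)(\lambda^p-1)^2$ of $M_*$.
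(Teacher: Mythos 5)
Your argument is correct and is precisely the computation the paper leaves implicit: the Corollary is stated without proof as an immediate consequence of the Proposition, and your explicit iteration $M_*^k\delta_0=\delta_k$, $M_*^{p+k}\delta_0=\delta_k-\eta_k$, together with the $M_*$-invariance of the span and the independence of the $2p+2$ cycles, is exactly the routine verification intended. The closing observation that $(M_*^p-1)^2\delta_0=0$ correctly ties the result to the minimal polynomial $(\lambda-1)(\lambda^p-1)^2$ recorded in the paper.
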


\subsection{The monodromy of the fibration  $f: \C^2 \to \C\setminus \{t_1,t_2\} $ .}

The monodromy group is generated by two operators, $M_1$ and $M_2=M$ related to the singular values $t_1, t_2=0$.  The value $t_1$ corresponds to a Morse critical point and the operator $M_1$ is given by the Picard-Lefschetz formula. Its description amounts to compute the intersection form on $H_1(f_t,\Z)$.
\begin{proposition}
\label{intersections}
The intersection numbers of the  $2p+2$ cycles
$$
 \delta_{13},  \delta_{23}, \delta_i,  \delta_{12}^i, 0\leq i,j\leq p-1
$$
 generating $H_1(f_t,\Z)$ are as follows 
$$
(\delta_{12}^i \cdot \delta_{12}^j)=0, (\delta_{12}^i \cdot \delta_{13}) = 0, (\delta_{12}^i \cdot \delta_{23}) = 0, (\delta_{13} \cdot \delta_{23})=0
$$
$$
(\delta_i \cdot  \delta_{13})= (\delta_i \cdot  \delta_{23})=1
$$
$$
(\delta_i\cdot \delta_{12}^i) = 1; (\delta_i \cdot \delta_{12}^j) = 0, i\neq j
$$
$$
(\delta_i \cdot \delta_j)= -1, i<j .
$$
\end{proposition}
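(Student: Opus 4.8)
The plan is to read all the pairwise intersection numbers off the explicit deformation-retract graph of $f_t$ constructed in the previous subsections (figures \ref{fig2} and \ref{fig3}), representing each basis cycle by a loop carried by that graph and computing $(\,\cdot\,\cdot\,)$ as a signed count of transverse crossings. The organizing principle is that $M$ is an orientation-preserving diffeomorphism, so $M_*$ is an isometry of the (skew-symmetric) intersection form: $(M_*a\cdot M_*b)=(a\cdot b)$ for all $a,b\in H_1(f_t,\Z)$. Using the action of $M_*$ established above (the cyclic shift $\delta_i\mapsto\delta_{i+1}$ for $i<p-1$, the cyclic permutation of the $\delta_{12}^k$, and the fixing of $\delta_{13},\delta_{23}$), applying $M^{-i}$ collapses the table to a short list of seed numbers of the form $(\delta_0\cdot\,\cdot\,)$. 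Concretely $(\delta_i\cdot\delta_{13})=(\delta_0\cdot\delta_{13})$, $(\delta_i\cdot\delta_{23})=(\delta_0\cdot\delta_{23})$, $(\delta_i\cdot\delta_{12}^j)=(\delta_0\cdot\delta_{12}^{\,j-i})$ with indices mod $p$, and $(\delta_i\cdot\delta_j)=(\delta_0\cdot\delta_{\,j-i})$ for $0\le i<j\le p-1$.

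First I would dispose of the vanishing entries by locality. The cycles $\delta_{12}^0,\dots,\delta_{12}^{p-1}$ sit on the $p$ mutually disjoint cylinder components of $f_t$ near the corner $(0,0)$, whereas $\delta_{13}$ and $\delta_{23}$ are supported near the distinct corners $(0,1)$ and $(1,0)$. As these supports are pairwise disjoint in $f_t$, every mixed intersection among them vanishes, giving $(\delta_{12}^i\cdot\delta_{12}^j)=(\delta_{12}^i\cdot\delta_{13})=(\delta_{12}^i\cdot\delta_{23})=(\delta_{13}\cdot\delta_{23})=0$.

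Next I would fix the seeds against the vanishing cycles using the Picard-Lefschetz structure of $M^p$. Having just shown the $p+2$ vanishing cycles to be pairwise orthogonal, the corresponding transvections $T_\delta:\gamma\mapsto\gamma-(\gamma\cdot\delta)\delta$ commute, and the generalized Picard-Lefschetz formula identifies $M^p$ with their product; hence $M^p\gamma=\gamma-\sum_{\delta}(\gamma\cdot\delta)\delta$, the sum running over the $p+2$ vanishing cycles. On the other hand, transporting the relation $M^p\delta_0=\delta_0-\delta_{12}^0-\delta_{13}-\delta_{23}$ by $M^i$ gives $M^p\delta_i=\delta_i-\delta_{12}^i-\delta_{13}-\delta_{23}$ for every $i$. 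Comparing the two expressions and invoking the linear independence of the $p+2$ vanishing cycles reads off $(\delta_i\cdot\delta_{12}^j)=\delta_{ij}$ and $(\delta_i\cdot\delta_{13})=(\delta_i\cdot\delta_{23})=1$, which is exactly the block of the statement concerning $\delta_{12}^j,\delta_{13},\delta_{23}$. The same conclusion can be seen directly from the graph: the real oval meets, near each corner, only the single real branch --- the $k=0$ component at $(0,0)$ --- so it crosses $\delta_{12}^0,\delta_{13},\delta_{23}$ once each and misses $\delta_{12}^k$ for $k\neq0$.

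The remaining and hardest entries are the mutual intersections of the oval orbit, $(\delta_i\cdot\delta_j)=-1$ for $i<j$, which by the reduction amount to proving $(\delta_0\cdot\delta_k)=-1$ for all $k=1,\dots,p-1$. Here monodromy invariance does not suffice: running the isometry relation through the exceptional value $M_*\delta_{p-1}=\delta_0-\delta_{12}^0-\delta_{13}-\delta_{23}$ only yields the symmetry $(\delta_0\cdot\delta_k)+(\delta_0\cdot\delta_{p-k})=-2$, which is consistent with the answer $-1$ but leaves the individual values undetermined (the single operator $M$ does not pin down its invariant skew form). I would therefore compute $(\delta_0\cdot\delta_k)$ honestly, by drawing the successive drag-images $\delta_k=M^k\delta_0$ of the oval simultaneously in the graph of figure \ref{fig3} and counting their crossings, using the symmetry relation and the identity $(\delta_0\cdot\delta_{p-1})=-(\delta_0\cdot\delta_1)-2$ only as consistency checks. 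I expect this final step --- establishing that the crossing number equals the same value $-1$ for all pairs, not merely for consecutive indices --- to be the main obstacle, since it requires controlling the global winding of the whole orbit of $\delta_0$ around the triangle rather than a single local model.
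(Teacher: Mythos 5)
Your treatment of the easy blocks is sound and matches the paper's: the vanishing entries follow from disjointness of supports, the reduction of the whole table to seeds $(\delta_0\cdot\,\cdot\,)$ uses invariance of the intersection form under $M_*$, and your relation $(\delta_0\cdot\delta_k)+(\delta_0\cdot\delta_{p-k})=-2$ is exactly the computation the paper performs. But for the last and hardest block, $(\delta_i\cdot\delta_j)=-1$ for $i<j$, you do not give a proof: you defer it to an unperformed direct count of crossings of the drag-images $M^k\delta_0$ in the graph, and you yourself flag this as ``the main obstacle''. The idea you are missing is the paper's geometric lemma that makes that count unnecessary. Deforming $f$ inside $U_{13}\cup U_{23}$ to $\tilde f=(1-x-y)x^p(1-x)$ and projecting along $x=\mathrm{const}$ onto the line $\{1-x-y=0\}$ (fig.~\ref{fig5}), one sees that the two endpoints of the relative cycle $\delta_0\cap(U_{13}\cup U_{23})$ rotate \emph{in the same direction} as $t$ turns around $0$; since the relative cycles in $U_{12}$ contribute nothing, this forces $(\delta_0\cdot\delta_i)=\pm1$ for every $1\le i\le p-1$, and by monodromy invariance all $(\delta_i\cdot\delta_j)$, $i<j$, are equal to one common value $\epsilon\in\{+1,-1\}$. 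With that in hand, the identity you relegate to a ``consistency check'' becomes the final step: $\epsilon=(\delta_0\cdot\delta_1)=-(\delta_0\cdot\delta_{p-1})-2=-\epsilon-2$, hence $\epsilon=-1$. (You correctly observe that the symmetry relation alone does not pin down the individual values; what it lacks is precisely the equality of all of them.)

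A secondary remark: you obtain the second and third lines of the proposition by asserting that $M^p$ equals the product of the commuting transvections in all $p+2$ vanishing cycles, so that $M^p\gamma=\gamma-\sum_\delta(\gamma\cdot\delta)\delta$ for every $\gamma$. That global Picard--Lefschetz formula for the non-isolated singular fibre is plausible but is not established in the paper, which only quotes the action of $M^p$ on the single cycle $\delta_0$; the paper instead reads $(\delta_i\cdot\delta_{12}^j)=\delta_{ij}$ off the local model and fixes $(\delta_0\cdot\delta_{13})=(\delta_0\cdot\delta_{23})=1$ by a choice of orientation of the vanishing cycles. Your route is acceptable if you justify the transvection formula, but as stated it smuggles in the very intersection numbers it is meant to compute.
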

\begin{proof}
\begin{figure}
\begin{center}
\def\svgwidth{0,7\columnwidth}
 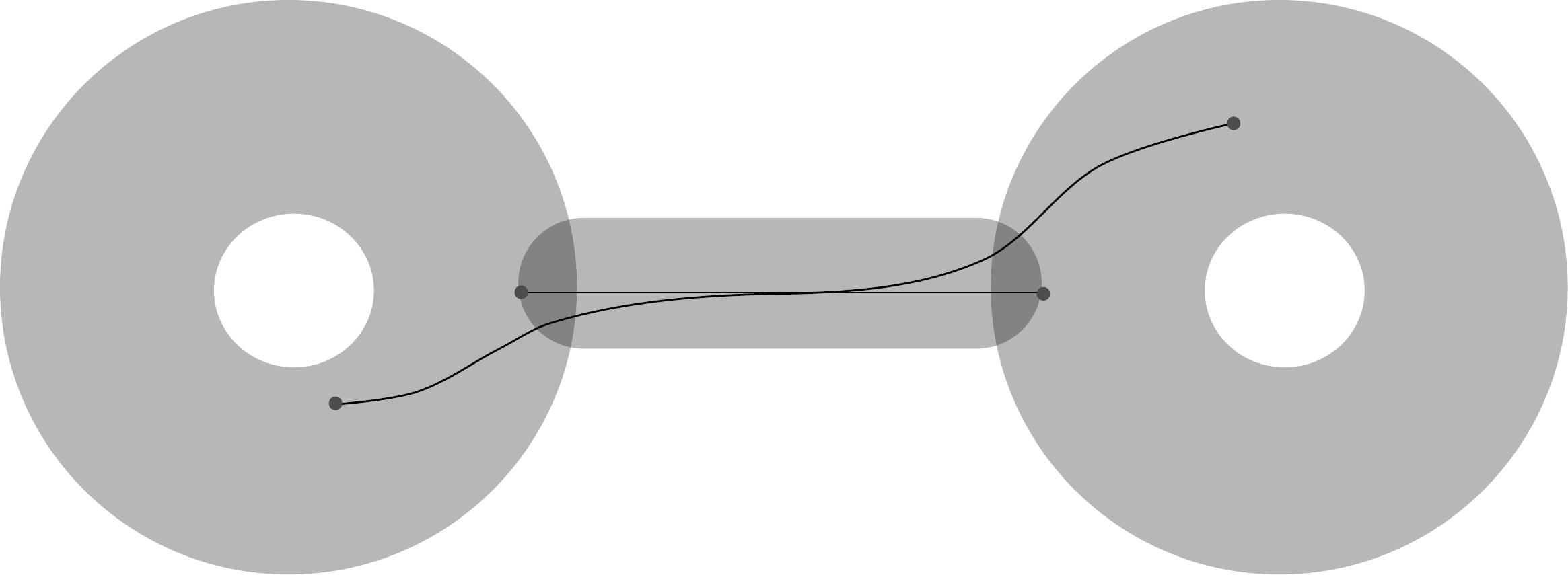
 \end{center}
 \caption{The biholomorphic image of $f_t$ on the line $1-x-y=0$ in $U_{12}$}
 \label{fig5}
 \end{figure}
The first and the third lines are obvious.  For the second, we choose an orientation on $\delta_{13}, \delta_{23}$ in such a way, that
$$
(\delta_0 \cdot  \delta_{13})= (\delta_0 \cdot  \delta_{23})=1
$$
and then use the invariance of the intersection number under the action of the monodromy. The only non-trivial fact is the fourth line. We note that according to fig. \ref{fig3}, the intersection number of the relative cycles $\delta_i\cap U_{12}$, $\delta_j\cap U_{12}$ are well defined and equal $0$.
 The fibration defined by $f$ in $U_{23 }\cup U_{13}$ can be further continuously deformed in a way, which does not change the topology of the fibers and their monodromy. Namely, by such a deformation we may replace $f$ by 
$$
\tilde{f} = (1-x-y)x^p(1-x))
$$
and consider the linear projection $$\pi: U_{23}\cup U_{13} \to U_{23}\cup U_{13} \cap \{1-x-y=0\}$$
parallel to the lines $x=const.$ As in the section \ref{toy}, the projection 
$$
\pi : U_{23}\cup U_{13}\cap f_t \to U_{23}\cup U_{13} \cap \{1-x-y=0\}
$$
is an injectif local biholomorphism, and its image is shown on fig. \ref{fig5}. When $t$ makes one turn around the origin, the two marked points corresponding to the ends of the relative cycle $\delta_0 \cap U_{23}\cup U_{13}$ turn in the same direction. The result is the relative cycle  $\delta_1 \cap U_{23}\cup U_{13}$ shown on fig. \ref{fig5}. This already proves that 
$$(\delta_0 \cdot \delta_i)= (\delta_0 \cdot \delta_j) = \pm 1, 1\leq i,j \leq p-1$$
and by invariance of the intersection form the numbers 
$$(\delta_i \cdot \delta_j), 1\leq i< j \leq p-1
$$
are all equal to the either $+1$ or to $-1$.
We have finally
\begin{align*}
(\delta_0 \cdot \delta_1)= & (M^{p-1}_*\delta_0 \cdot M^{p-1}_* \delta_1) =  (\delta_{p-1} \cdot M^{p}_* \delta_0) \\
= &(\delta_{p-1} \cdot   (\delta_0 - \delta_{12}^0 - \delta_{13}- \delta_{23}) )= - (\delta_{0} \cdot   \delta_{p-1}) - 2\\
=& -(\delta_0 \cdot \delta_1) -2
\end{align*}
and hence $(\delta_0 \cdot \delta_1)=-1$.
\end{proof}

\section{A case study : the polynomial  $x^2y^2(1-x-y)$.}
In this section we consider in detail the first non-trivial case $p=2$, in which the fibers $f_t$ are genus two Riemann surfaces with three punctures. 
For definiteness, denote $M_1, M_2=M \in Aut(H_1(f_t,\Z))$ the monodromy operators associated to simple closed loops around $t_1$ or respectively $t_2=0$. 
The monodromy group $\mathbf M$ of the polynomial $x^2y^2(1-x-y)$ is then the subgroup of $Aut(H_1(f_t,\Z))$ generated by $M_1, M_2$. The smallest algebraic variety containing $\mathbf M$ is an algebraic group, denoted $\mathbf G$. It is the Zarisky closure of $\mathbf M$. 

We note that $(H_1(f_t,\Z))$ carries a (degenerate) intersection form $\omega$ of rank $p=2$ invariant under the action of $M_1, M_2$. It is easily 
verified, that $\mathbf M$ and hence $\mathbf G$  is isomorphic to a subgroup of the symplectic group $Sp(4,\C)$.

We shall prove the following
\begin{theorem}
\label{case2}
The Zarisky closure of the monodromy group of the polynomial $x^2y^2(1-x-y)$ is isomorphic to the symplectic group $Sp(4,\C)$.
\end{theorem}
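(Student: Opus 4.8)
The plan is to prove that $\mathbf M$ is Zariski dense in $Sp(4,\C)$ by working at the level of the connected component $\mathbf G^0$ and, if needed, its Lie algebra. Since the excerpt already records $\mathbf G\subseteq Sp(4,\C)$ and $Sp(4,\C)$ is connected, it suffices to show $\mathbf G^0=Sp(4,\C)$. First I would write the two generators explicitly on the invariant plane $V_4\subset H_1(f_t,\C)$, taking as a basis the orbit of $\delta_0$ under $M_2=M$, which spans $V_4$ by the Corollary of the previous section. In this basis $M_2$ is the permutation-type operator read off from the boxed monodromy formulas, while $M_1$ is the Picard--Lefschetz transvection attached to the Morse value $t_1$. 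Since the oval $\delta_0$ is precisely the cycle vanishing at $t_1$, one has $M_1(x)=x+\omega(x,\delta_0)\,\delta_0$ up to the usual sign, where $\omega$ is the intersection form of Proposition \ref{intersections}. In particular $N_1:=M_1-\mathrm{id}$ has rank one, i.e. $M_1$ is a \emph{symplectic transvection}.

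The crucial structural input is irreducibility of $\mathbf G^0$ on $V_4$. As $M_1$ is unipotent it lies in $\mathbf G^0$, and since $\mathbf G^0$ is normal in $\mathbf G$ the conjugates $T_k:=M_2^{\,k}M_1M_2^{-k}\in\mathbf G^0$ are again transvections, now directed along $\delta_k=M_2^{\,k}\delta_0$. A subspace $W\subseteq V_4$ left invariant by a transvection along $v$ must satisfy $v\in W$ or $W\subseteq v^{\perp_\omega}$. Applying this to every $T_k$, using that the $\delta_k$ span $V_4$ and that $\omega$ is nondegenerate on $V_4$, together with the explicit intersection numbers of Proposition \ref{intersections} (which make the non-orthogonality relation among the $\delta_k$ connected), forces $W=0$ or $W=V_4$. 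Hence $\mathbf G^0$ acts irreducibly.

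It then remains to identify the connected irreducible subgroup $\mathbf G^0\subseteq Sp(4,\C)$ that contains a transvection. Here I would invoke the classification of connected algebraic subgroups of $Sp(4,\C)$ acting irreducibly on the standard representation: up to conjugacy the only proper such subgroup is the principal $SL_2$, embedded via $\mathrm{Sym}^3$ of the standard $SL_2$-representation. Its nontrivial unipotent elements are regular --- a single Jordan block of size four, so $\mathrm{rank}(u-\mathrm{id})=3$ --- and in particular it contains \emph{no} transvection. Since $\mathbf G^0$ does contain the transvection $M_1$, it cannot be the principal $SL_2$, whence $\mathbf G^0=Sp(4,\C)$, and therefore $\mathbf G=\mathbf G^0=Sp(4,\C)$. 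Avoiding the classification, one may argue instead on $\mathfrak g=\mathrm{Lie}(\mathbf G^0)\subseteq\mathfrak{sp}(4,\C)$: the rank-one nilpotents $\log T_k=N_k$ are long-root vectors whose directions span $V_4$, and a direct bracket computation shows they generate the full $10$-dimensional $\mathfrak{sp}(4,\C)$.

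I expect the main obstacle to be exactly this last identification: exhibiting the transvection cleanly and, above all, excluding the principal $SL_2$ (equivalently, verifying that the generated Lie algebra does not close up into a proper irreducible symplectic subalgebra such as $\mathrm{Sym}^3\mathfrak{sl}_2$). The transvection nature of $M_1$ --- a rank-one rather than a regular unipotent --- is precisely the feature that makes this exclusion succeed, so the computation establishing $\mathrm{rank}(M_1-\mathrm{id})=1$ on $V_4$ is the load-bearing step.
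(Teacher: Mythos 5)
Your proof is correct, but it follows a genuinely different route from the paper's. The paper proves Theorem \ref{case2} by brute force at the Lie-algebra level: from $M_1^k$, $M_2^2$ and $M_2M_1M_2^{-1}$ (all unipotent on $V_4$, so their Zariski closures are one-parameter subgroups of $\mathbf G^0$) it extracts three explicit nilpotent matrices $a,b,c\in\mathfrak g$, and then verifies by direct computation that the Lie algebra they generate contains a Cartan subalgebra and all eight root spaces of $\mathfrak{sp}(4,\C)$, hence equals $\mathfrak{sp}(4,\C)$. You instead isolate the two structural features -- $M_1$ is a symplectic transvection along the vanishing cycle $\delta_0$ (which the paper's explicit matrix confirms: $M_1x=x-\omega(x,\delta_0)\delta_0$, so $\mathrm{rank}(M_1-\mathrm{id})=1$), and the $M_2$-orbit of $\delta_0$ spans $V_4$ with a connected non-orthogonality graph ($\omega(\delta_0,\delta_1)=-1$, $\omega(\delta_0,M_2^2\delta_0)=-3$, $\omega(\delta_1,M_2^3\delta_0)=-3$ by Proposition \ref{intersections}) -- to get irreducibility of $\mathbf G^0$, and then you quote the classification of connected irreducible subgroups of $Sp(4,\C)$ to exclude the principal $\mathrm{Sym}^3$-embedded $SL_2$, whose nontrivial unipotents are regular rather than rank one. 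Both arguments are complete; the trade-off is that yours imports an external classification result (or, in the Lie-algebra variant you sketch, still needs the bracket computation the paper actually performs), while the paper's is self-contained but purely computational and sheds no light on why it works. Your approach has the real advantage of scaling: for the conjecture $\mathbf G=Sp(2p,\C)$ stated at the end of the paper, the transvection-plus-irreducibility argument is exactly what one would want, since an irreducible connected algebraic subgroup of $Sp(2p,\C)$ containing a transvection is the full symplectic group for every $p$, whereas the explicit root-space computation does not generalize. One small point to make explicit if you write this up: $\mathbf G^0$ acting irreducibly on a faithful representation forces it to be reductive with finite centre, which is what legitimizes restricting the classification to semisimple subgroups.
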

To the end of the section we give the proof of this remarkable fact. 
A basis of the first homology group $H_1(f_t,\Z)$ will be chosen as in the preceding section
\begin{equation}
\delta_0, \delta_1, \delta_{12}^0, \delta_{12}^0, \delta_{13}, \delta_{23}
\label{basis}
\end{equation}
where $\delta_0=\delta_0(t)$ is a cycle vanishing at the unique Morse critical point when $t$ tends to $t_1$, $\delta_{13}, \delta_{23}$ are vanishing cycles at the singular points $(1,0), (0,1)$, and $\delta_{12}^0, \delta_{12}^0$ are cycles vanishing at $(0,0)$.
The cycle $\delta_1$ is the image of $\delta_0$ under the action of the monodromy operator about the singular value $t=0$, $\delta_1=M_*\delta_0$. The cycles (\ref{basis}) are represented by closed loops on the Riemann surface $f_t$, and
by abuse of notation we denote these loops by the same letter.  The closed loops can be chosen in a way that their union is a deformation retract of $f_t$,  see fig.\ref{fig1}.

\begin{figure}
\begin{center}
 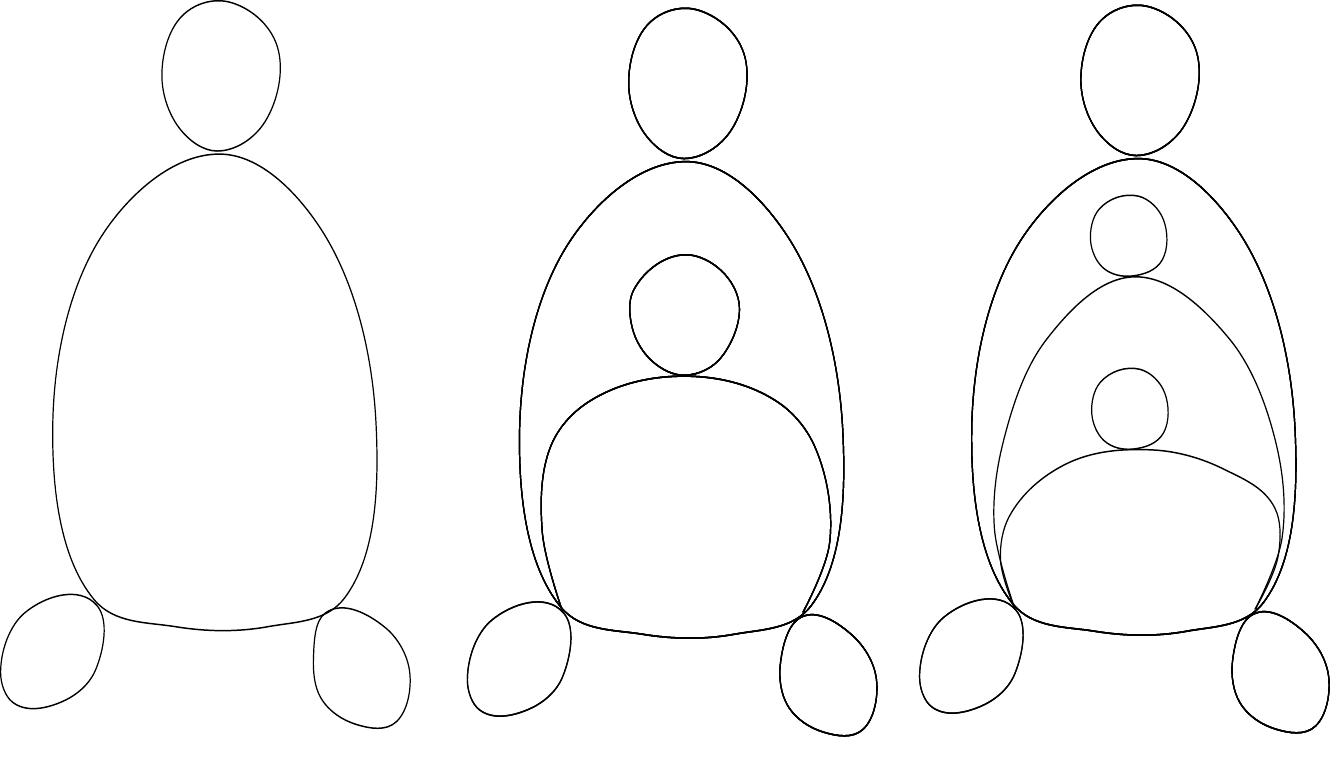
 \end{center}
 \caption{The canonical basis of generators of the fundamental group of the fiber $f_t$.}
 \label{fig1}
 \end{figure}
 To compute $M_1$ we note that according to Proposition \ref{intersections}
the sign of intersection indexes of the cycles of $H_1(f_t,\Z)$ can be chosen
 as follows
\begin{center}
\begin{tabular}{|c|c|c|c|c|c|c|}\hline 
& $\delta_0$ &$ \delta_1 $& $\delta_{12}^0 $& $\delta_{12}^1 $& $\delta_{13} $& $\delta_{23}$ \\\hline $\delta_0$ & 0 & -1 & 1 & 0 & 1 & 1 
\\\hline $\delta_1$ & 1 & 0 & 0 & 1 & 1 & 1 \\\hline $\delta_{12}^0 $ & -1 & 0 & 0 & 0 & 0 & 0 \\\hline $\delta_{12}^1 $ & 0 & -1 & 0 & 0 & 0 & 0 \\\hline $\delta_{13} $& -1 & -1 & 0 & 0 & 0 & 0 \\\hline $\delta_{23} $ & -1 & -1 & 0 & 0 & 0 & 0 \\\hline \end{tabular}
\end{center}
The monodromy operators $M_1$ and $M_2^2$  in this  basis (\ref{basis})  are represented by the following matrices (denoted by the same letter).
$$
M_1=\left(\begin{array}{c|ccccc} 1& -1 & 1 & 0 & 1 & 1 \\\hline0 & 1 & 0 & 0 & 0 & 0 \\0 & 0 & 1 & 0 & 0 & 0 \\0 & 0 & 0 & 1 & 0 & 0 \\0 & 0 & 0 & 0 & 1 & 0 \\0 & 0 & 0 & 0 & 0 & 1\end{array}\right), \;\;
M_2=\left(\begin{array}{cc|cc|cc}0 & 1 & 0 & 0 & 0 & 0 \\1 & 0 & 0 & 0 & 0 & 0 \\\hline0 & -1 & 0 & 1 & 0 & 0 \\0 & 0 & 1 & 0 & 0 & 0 \\\hline0 & -1& 0 & 0 & 1 & 0 \\0 & -1 & 0 & 0 & 0 & 1\end{array}\right)
$$
The homology group $H_1(f_t,\Z)$ splits into two invariant subspaces under $\mathbf{M}$ 
$$
H_1(\Gamma_h,\C)= V_1\oplus V_2$$
where
$$
V_1=Span \{\delta_0, \delta_1, \delta_{12}^0+\delta_{13}+ \delta_{23},  \delta_{12}^1 + \delta_{13}+ \delta_{23} \}, V_2=Span \{\delta_{13}- \delta_{23} , \delta_{13}+ \delta_{23}- 2 \delta_{12}^0 - 2 \delta_{12}^1\} .
$$
The monodromy group $\mathbf{M}$ acts on $V_2$ as the identity transformation, and on $V_1$ in the basis
$
\delta_0, \delta_1, \delta_{12}^0+\delta_{13}+ \delta_{23},  \delta_{12}^1 + \delta_{13}+ \delta_{23}$
 the monodromy operators are represented by the following matrices (which we denote by the same letters)
$$
M_1=\left(\begin{array}{cccc}1 & -1 & 3 & 2 \\0 & 1 & 0 & 0 \\0 & 0 & 1 & 0 \\0 & 0 & 0 & 1\end{array}\right),
M_2=\left(\begin{array}{cccc}0 & 1 & 0 & 0 \\1 & 0 & 0 & 0 \\0 & -1 & 0 & 1 \\0 & 0 & 1 & 0\end{array}\right).
$$
Let $\mathfrak g $ be the Lie algebra of $\mathbf G$, that is to say the tangent space $T_{I} \mathbf G$ of the variety $\mathbf G$ at the identity matrix $I$.
Clearly $\frak g $  is isomorphic to a sub-algebra of $sp(4,\C)$ and to prove Theorem \ref{case2}
 it will be enough to check that $\frak g $ is isomorphic to $sp(4,\C)$. For this let us note first that
 $$
 M_1^k = \left(\begin{array}{cccc}1 & -k & 3k & 2k \\0 & 1 & 0 & 0 \\0 & 0 & 1 & 0 \\0 & 0 & 0 & 1\end{array}\right) \in \mathbf M, \forall k\in \mathbb Z $$
 which implies 
 $$
 M_1^z = \left(\begin{array}{cccc}1 & -z & 3z & 2z \\0 & 1 & 0 & 0 \\0 & 0 & 1 & 0 \\0 & 0 & 0 & 1\end{array}\right)  \in \mathbf G, \forall z\in \mathbb C 
 $$
 and hence
 $$
a= \left(\begin{array}{cccc}0 & -1 & 3 & 2 \\0 & 0 & 0 & 0 \\0 & 0 & 0 & 0 \\0 & 0 & 0 &0 \end{array}\right) \in \mathfrak g
$$
Similarly
$$
M_2^2=\left(\begin{array}{cccc}1 & 0 & 0 & 0 \\0 & 1 & 0 & 0 \\-1 & 0 & 1 & 0 \\0 & -1 & 0 & 1\end{array}\right), M_2M_1M_2^{-1}=
\left(\begin{array}{cccc}1 & 0 & 0 & 0 \\1 & 1 & 2 & 3 \\0 & 0 & 1 & 0 \\0 & 0 & 0 & 1\end{array}\right)
$$
and taking powers of these matrices we conclude that
$$
c=\left(\begin{array}{cccc}0 & 0 & 0 & 0 \\0 & 0 & 0 & 0 \\-1 & 0 & 0 & 0 \\0 & -1 & 0 & 0\end{array}\right), 
b=\left(\begin{array}{cccc}0 & 0 & 0 & 0 \\1 & 0 & 2 & 3 \\0 & 0 & 0 & 0 \\0 & 0 & 0 & 0\end{array}\right)
$$
belong to $\mathfrak g$. We shall check that in fact $\mathfrak g$ is generated as a Lie algebra by $a,b,c$. 

\begin{proposition}
The Lie algebra $\mathfrak g$ generated by the matrices $a,b,c$ is isomorphic to the symplectic algebra $\mathfrak sp (4,\C)$.
\end{proposition}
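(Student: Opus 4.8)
The plan is to compute the Lie algebra $\mathfrak g$ generated by $a,b,c$ essentially by brute force, and then compare its dimension with that of its ambient algebra. Since $a,b,c\in \mathfrak{sp}(4,\C)$ and $\mathfrak g$ is by definition the smallest Lie subalgebra containing them, we have $\mathfrak g\subseteq \mathfrak{sp}(4,\C)$, and $\dim_{\C}\mathfrak{sp}(4,\C)=10$. It therefore suffices to produce ten matrices, each obtained from $a,b,c$ by iterated Lie brackets, that are linearly independent over $\C$: this forces $\dim \mathfrak g=10=\dim \mathfrak{sp}(4,\C)$, and hence $\mathfrak g=\mathfrak{sp}(4,\C)$.

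First I would organize the computation around the pair $a,b$. A direct calculation gives $[a,[a,b]]=2a$ and $[b,[a,b]]=-2b$, so that $a$, $b$ and $h:=[a,b]$ span a three-dimensional simple subalgebra of $\mathfrak g$ isomorphic to $\mathfrak{sl}_2(\C)$. This already shows $\mathfrak g$ is at least three-dimensional, and it gives a convenient grading by $\mathrm{ad}\,h$ with respect to which the remaining brackets decompose neatly. I would then compute the first batch of commutators $[a,b],[a,c],[b,c],[c,[a,b]],[c,[a,c]],[a,[b,c]]$, record each as its list of sixteen entries, and assemble these six vectors together with $a,b,c$ into a $9\times 16$ matrix. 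Row reduction shows that these nine matrices are linearly independent, so that $\dim \mathfrak g\ge 9$.

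The subtle point, and the main obstacle, is obtaining the tenth dimension. Naively continuing to bracket the elements already found tends to reproduce them: for instance $[a,[a,c]]=6a$ and $[b,[a,b]]=-2b$, while the Jacobi identity forces the relation $[b,[a,c]]=[a,[b,c]]+[c,[a,b]]$. Thus the obvious second-generation brackets all fall back into the nine-dimensional span and the rank stalls at $9$. One must therefore locate a genuinely new direction. I would take the triple commutator $[c,[a,[b,c]]]$; a short computation shows that its last row is not a linear combination of the last rows of the nine matrices already produced, which certifies at once that it is linearly independent from all of them. Adjoining it yields ten linearly independent elements of $\mathfrak g$.

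Finally, since these ten matrices all lie in $\mathfrak{sp}(4,\C)$ and are independent, while $\dim\mathfrak{sp}(4,\C)=10$, we conclude $\mathfrak g=\mathfrak{sp}(4,\C)$, as claimed; combined with the previous sections this proves Theorem \ref{case2}. The hard part is thus not any individual bracket but the bookkeeping: recognizing that the $\mathfrak{sl}_2$-relations and the Jacobi identity make the second-generation brackets collapse into the same nine-dimensional subspace, and singling out the one higher commutator that escapes it.
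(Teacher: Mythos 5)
Your proposal is correct: I checked the key numerical claims ($[a,[a,b]]=2a$, $[a,[a,c]]=6a$, the linear independence of $a,b,c,[a,b],[a,c],[b,c],[c,[a,b]],[c,[a,c]],[a,[b,c]]$, and the fact that the last row $(-3,0,-6,-4)$ of $[c,[a,[b,c]]]$ violates the relation $2z+5x=3w$ satisfied by the span of the last rows of the first nine), and the logic --- ten independent elements of $\mathfrak g$ inside the ten-dimensional $\mathfrak{sp}(4,\C)$ force equality --- is sound. The route is genuinely different from the paper's, though both ultimately rest on the same a priori inclusion $\mathfrak g\subset\mathfrak{sp}(4,\C)$ and on exhibiting ten elements of $\mathfrak g$ spanning it. Where you take raw iterated brackets and certify independence by a rank computation (with the one non-mechanical step being the discovery of the escaping commutator $[c,[a,[b,c]]]$), the paper instead builds structured linear combinations: it takes $H_1=[a,b]$ as a first Cartan element, forms $X_{21}=-3[a,b]+[a,c]-4a$ and $X_{12}=3[a,b]+[b,c]+4b$, sets $H_2=[X_{21},X_{12}]$, and then displays the eight root vectors of $\mathfrak{sp}(4,\C)$ relative to the Cartan subalgebra $\langle H_1,H_2\rangle$ as explicit brackets of $a,b,c$. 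Your argument is more elementary and easier to certify by machine, but it treats the tenth generator as a lucky find; the paper's argument requires guessing the right combinations $X_{12},X_{21}$, but in exchange it produces the full root-space decomposition (Table 1), which identifies the representation structurally and makes the irreducibility of the standard representation --- the fact actually used for the degree bound --- immediately visible. Either proof suffices for the Proposition.
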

\begin{proof}
As  $\mathfrak g \subset \mathfrak sp (4,\C)$ it is enough to compute the Cartan decomposition of $ \mathfrak sp (4,\C)$ with respect to the intersection form  on $V_1\subset H_1(f_t,\Z)$, 
and verify that the basis of the decomposition belongs to $\mathfrak g$.
Note first that 
$$
[[a,b],a]= - 2 a, [[a,b],b] = 2 b
$$
and hence the matrices  $a,b,[a,b]$ 
generate   $\mathfrak sl_2(\mathbb C)$. This suggests that the matrix $H_1=[a,b]$ belongs to the Cartan subalgebra $\mathfrak h \subset \mathfrak g$. To find a second element $H_2$ of $\mathfrak h$ we compute  (some)  eigenvectors of $\text{ad}_{H_1} : \mathfrak g \to \mathfrak g$ , where  $\text {ad}_{H_1}(X)= H_1 X- X H_1$, until finding an appropriate candidate for $H_2$, after what we display the various root spaces. Namely, let
$$
X_{21}= -3 [a,b] + [a,c] - 4 a, X_{12}=  3 [a,b] + [b,c]+4b , H_2= [ X_{21}, X_{12}] .$$
Then $\mathfrak h = <H_1,H_2>$ is the Cartan subalgebra and let
$$
\lambda_1, \lambda_2 \in \mathfrak h^*, \lambda_1(H_1)=1,  \lambda_1(H_2)=-5,  \lambda_2(H_1)=0,  \lambda(H_2)=5 .
$$
be a basis of the dual space $\mathfrak h^*$. 
Then it is straightforward to check that
$\pm\lambda_1\pm\lambda_2$ are roots with corresponding one-dimensional roots spaces $\mathfrak g_{\pm\lambda_1\pm\lambda_2}$ 
spanned  by the vector  on the second line of Table \ref{table},
where
 $$
 Y_{12}= [X_{21},b], Z_{12}= [X_{12},a], U_1= b, V_1= a, U_2=  [[X_{21},b],X_{21}], V_2=  [ X_{12},[ X_{12},a]].
 $$
\begin{table}[h]
\begin{center}
\begin{tabular}{|c|c|c|c|c|c|c|c|}
\hline
$\mathfrak g_{\lambda_1-\lambda_2}$&$\mathfrak g_{-\lambda_1+\lambda_2}$&$\mathfrak g_{ \lambda_1+\lambda_2}$ &$\mathfrak g_{-\lambda_1-\lambda_2}$&
$\mathfrak g_{2\lambda_1}$&$\mathfrak g_{-2\lambda_1}$&$\mathfrak g_{2\lambda_2}$ &$\mathfrak g_{-2\lambda_2}$\\
\hline
$X_{12}$&$X_{21}$&$Y_{12}$ &$Z_{12}$&
$U_1$&$V_1$&$U_2$ &$V_2$\\
\hline
\end{tabular}
\end{center}
\caption{Root spaces of $\mathfrak g$}
\label{table}
\end{table}%

\end{proof}

\section{Concluding remarks}
Let $f=f(x,y)$ be an arbitrary non-constant polynomial. The set ${\cal A} $ of its  non-regular values is finite and therefore we can consider the monodromy representation of the fundamental group $\pi_1(\C \setminus  \mathcal A ,*)$ on $H_1(f^{-1}(t), \Z)$. 
  Cearly the representation  preserves the intersection form of the first homology group $H_1(f^{-1}(t), \Z)$. 
 
 The subplane $V_0 \subset H_1(f^{-1}(t), \Z)$ of zero-cycles (the kernel of the intersection form) is invariant, and
  $\pi_1(\C \setminus  \mathcal A ,*)$ acts on it trivially.
Therefore the reduced  representation of the fundamental group on $V=H_1(f^{-1}(t), \Z)/V_0$ is well defined too, and $V$ carries an invariant \emph{ non-degenerate} intersection form. 
The reduced monodromy group is thus a subgroup of  $ Sp(2p,\C)$ and denote by  $\mathbf G$ its Zarisky closure. Here $2p=\dim V$ and $p$ is the genus of the Riemann surface of $f^{-1}(t)$, $t\not\in \mathcal A$.

 It is well known that for  generic $f$ (e.g. Morse plus polynomials) we  have $\mathbf G = Sp(2p,\C)$. According to Theorem \ref{case2} this holds true also in the special Lotka-Volterra case $f=x^py^p(1-x-y)$, $p=2$.
\emph{ We conjecture that $\mathbf G = Sp(2p,\C)$ for every integer $p\geq 1$.}

On the other hand, if $f$ is a composite polynomial,  $f=g\circ h$, where $h : \C^2 \to \C^2$ is a polynomial mapping,
we can not expect that $\mathbf G = Sp(2p,\C)$. A simple example is $f= y^2+ P(x^2)$ for $P$ a polynomial of degree at least three. The  natural involution $x\to -x$ induces a decomposition $H_1(f^{-1}(t), \Z)/V_0 = V_+\oplus V_-$ where $V_{\pm}$ are invariant under $\mathbf G$ so  $\mathbf G \neq Sp(2p,\C)$. We note that  by the Ritt theorem \cite{ritt22} a univariate polynomial $f\in \C[x]$ is composite if and only if its monodromy group is imprimitive. Are there examples of non-composite  bivariate polynomials $f=f(x,y)$, such that $\mathbf G \neq Sp(2p,\C)$ ? 

\def\cprime{$'$} \def\cprime{$'$} \def\cprime{$'$} \def\cprime{$'$}
  \def\cprime{$'$} \def\cprime{$'$} \def\cprime{$'$}


\begin{thebibliography}{10}

\bibitem{aais94}
V.~I. Arnold, V.~S. Afrajmovich, Yu.~S. Il{\cprime}yashenko, and L.~P.
  Shil{\cprime}nikov.
\newblock {\em Bifurcation theory and catastrophe theory}.
\newblock Springer-Verlag, Berlin, 1999.
\newblock Reprint of the 1994 English edition from the series Encyclopaedia of
  Mathematical Sciences {\it Dynamical systems. V}, Encyclopaedia Math. Sci.,
  5, Springer, Berlin, 1994.

\bibitem{agv88v2}
V.I. {Arnold}, S.M. {Gusein-Zade}, and A.N. {Varchenko}.
\newblock {\em Singularities of differentiable maps, Volume 2. Monodromy and
  asymptotics of integrals.}
\newblock Boston, MA: Birkh\"auser, reprint of the 1988 hardback edition
  edition, 2012.

\bibitem{boga11}
Marcin Bobie{\'n}ski and Lubomir Gavrilov.
\newblock On the reduction of the degree of linear differential operators.
\newblock {\em Nonlinearity}, 24(2):373--388, 2011.

\bibitem{brow06}
Ronald Brown.
\newblock {\em Topology and groupoids}.
\newblock BookSurge, LLC, Charleston, SC, 2006.

\bibitem{copp89}
W.~A. Coppel.
\newblock Some quadratic systems with at most one limit cycle.
\newblock In {\em Dynamics reported, {V}ol.\ 2}, volume~2 of {\em Dynam.
  Report. Ser. Dynam. Systems Appl.}, pages 61--88. Wiley, Chichester, 1989.

\bibitem{ritt22}
J.~F. {Ritt}.
\newblock {Prime and composite polynomials.}
\newblock {\em {Trans. Am. Math. Soc.}}, 23:51--66, 1922.

\bibitem{sier00}
Dirk Siersma.
\newblock The vanishing topology of non isolated singularities.
\newblock In {\em New developments in singularity theory ({C}ambridge, 2000)},
  volume~21 of {\em NATO Sci. Ser. II Math. Phys. Chem.}, pages 447--472.
  Kluwer Acad. Publ., Dordrecht, 2001.

\bibitem{giho85}
Stephan~A. van Gils and Emil Horozov.
\newblock Uniqueness of limit cycles in planar vector fields which leave the
  axes invariant.
\newblock In {\em Multiparameter bifurcation theory ({A}rcata, {C}alif.,
  1985)}, volume~56 of {\em Contemp. Math.}, pages 117--129. Amer. Math. Soc.,
  Providence, RI, 1986.

\bibitem{zeko94}
Andr{\'e} Zegeling and Robert~E. Kooij.
\newblock Uniqueness of limit cycles in polynomial systems with algebraic
  invariants.
\newblock {\em Bull. Austral. Math. Soc.}, 49(1):7--20, 1994.

\bibitem{zola87}
Henryk {\.Z}o{\l}adek.
\newblock Bifurcations of certain family of planar vector fields tangent to
  axes.
\newblock {\em J. Differential Equations}, 67(1):1--55, 1987.

\bibitem{zola94}
Henryk {\.Z}o{\l}adek.
\newblock Quadratic systems with center and their perturbations.
\newblock {\em J. Differential Equations}, 109(2):223--273, 1994.

\bibitem{zola15}
Henryk {\.Z}o{\l}adek.
\newblock Melnikov functions in quadratic perturbations of generalized
  {L}otka-{V}olterra systems.
\newblock {\em J. Dyn. Control Syst.}, 21(4):573--603, 2015.

\end{thebibliography}
\end{document}